\newtheorem{theorem}{Theorem}[section]
\newtheorem{corollary}[theorem]{Corollary}
\theoremstyle{definition}
\newtheorem{definition}[theorem]{Definition}
\theoremstyle{remark}
\newtheorem{remark}[theorem]{Remark}
\numberwithin{equation}{section}
\newcommand{\F}{\mathbb{F}}
\newcommand{\Z}{\mathbb{Z}}
\newcommand{\R}{\Z_4+u\Z_4}
\begin{document}

\title[Linear codes over $\Z_4+u\Z_4$]{Linear Codes over $\Z_4+u\Z_4$: MacWilliams identities, projections, and formally self-dual codes}

%    Information for first author
\author{Bahattin Yildiz}
%    Address of record for the research reported here
\author{Suat Karadeniz}
\address{Department of Mathematics, Fatih University, 34500
Istanbul, Turkey}
\email{byildiz@fatih.edu.tr, skaradeniz@fatih.edu.tr}

%    General info
\subjclass[2000]{Primary 94B05; Secondary 11T71}

\keywords{complete
weight enumerator, MacWilliams identities, projections, lifts, formally self-dual codes, codes over rings}

\footnote{This work has been partially presented in the proceedings of the 13th International Workshop on Algebraic and combinatorial coding theory, Pomorie, Bulgaria, 2012.}

\begin{abstract}
Linear codes are considered over the ring $\Z_4+u\Z_4$, a non-chain extension of $\Z_4$. Lee weights, Gray maps for these codes are defined and MacWilliams identities for the complete, symmetrized and Lee weight enumerators are proved. Two projections from $\R$ to the rings $\Z_4$ and $\F_2+u\F_2$ are considered and self-dual codes over $\R$ are studied in connection with these projections. Finally three constructions are given for formally self-dual codes over $\R$ and their $\Z_4$-images together with some good examples of formally self-dual $\Z_4$-codes obtained through these constructions.
\end{abstract}

\maketitle

%%\section*{This is an unnumbered first-level section head}
%% This is an example of an unnumbered first-level heading.

%% The correct journal style for \specialsection is all uppercase; a known bug
%% in amsart.cls prevents this, so input must be uppercase until it is fixed.
%\specialsection*{This is a Special Section Head}
%% \specialsection*{THIS IS A SPECIAL SECTION HEAD}
%% This is an example of a special section head%
%%%%%%%%%%%%%%%%%%%%%%%%%%%%%%%%%%%%%%%%%%%%%%%%%%%%%%%%%%%%%%%%%%%%%%%%
%\footnote{Here is an example of a footnote. Notice that this
%footnote text is running on so that it can stand as an example of
%how a footnote with separate paragraphs should be written.
%\par
%And here is the beginning of the second paragraph.}%
%%%%%%%%%%%%%%%%%%%%%%%%%%%%%%%%%%%%%%%%%%%%%%%%%%%%%%%%%%%%%%%%%%%%%%%%

\section{Introduction}

Codes over rings have been a common research topic in coding theory. Especially after the appearance of \cite{Sloane:Gray}, a lot of research went towards studying codes over $\Z_4$. The rich algebraic structure of rings has allowed researchers get good results in coding theory. The rings studied have varied over the recent years, but codes over $\Z_4$ remain a special topic of interest in coding theory because of their relation to lattices, designs, cryptography and their many applications. There is a vast literature on codes over $\Z_4$; for some of the works done in this direction we refer to \cite{Duursma}, \cite{Gulliver}, \cite{Huffman}, \cite{Wan}, \cite{Wolf}, etc.

Recently, several families of rings have been introduced in coding theory, rings that are not finite chain but are Frobenius. These rings have a rich algebraic structure and they lead to binary codes with large automorphism groups and in some cases new binary self-dual codes(\cite{YKilk}, \cite{us}, \cite{68}, \cite{bordered}).

$\F_2+u\F_2$ is a size 4 ring that also has generated a lot of interest among coding theorists starting with \cite{Dougherty}. There is an interesting connection between $\Z_4$ and $\F_2+u\F_2$. Both are commutative rings of size $4$, they are both finite-chain rings and they have both been studied quite extensively in relation to coding theory. Some of the main differences between these two rings are that their characteristic is not the same, $\F_2$ is a subring of $\F_2+u\F_2$ but not that of $\Z_4$ and the Gray images of $\Z_4$-codes are usually not linear while the Gray images of $\F_2+u\F_2$-codes are linear.

Inspired by this similarity(and difference) between the two rings, and our previous works on the non-chain size 16 ring $\F_2+u\F_2+v\F_2+uv\F_2$,
we study codes over the ring $\Z_4+u\Z_4$ in this work. It turns out that this ring is also a non-chain, commutative ring of size $16$ but with characteristic 4. The ideal structure turns out to be very similar to that of $ \F_2+u\F_2+v\F_2+uv\F_2$.

We define linear Gray maps from $\R$ to $\Z_4^2$, extend the Lee weight from $\Z_4$ to $\R$ and we define and prove the MacWilliams identities  for all the weight enumerators involved.

We also study self-dual codes over $\R$ and the images of these codes under several maps. In particular, we introduce two projections, one from $\R$ to $\Z_4$ and the other from $\R$ to $\F_2+u\F_2$. We study certain properties of the projections in terms of the minimum weights and self-duality.

The last part of the paper is about three constructions for formally self-dual codes over $\R$ whose Gray images are also formally self-dual over $\Z_4$. We tabulate some good formally self-dual codes over $\Z_4$ obtained from formally self-dual codes over $\R$ through two of the constructions.

\section{Linear Codes over $\Z_4+u\Z_4$}
\subsection{The Ring $\Z_4+u\Z_4$}
$\Z_4+u\Z_4$ is constructed as a commutative, characteristic $4$ ring with $u^2 = 0$ and it is clear that
$\R \cong \Z_4[x]/(x^2)$.

Its units are given by
$$\{1,1+u,1+2u,1+3u,3,3+u,3+2u,3+3u\},$$

while the non-units are
$$\{0,2,u,2u,3u,2+u,2+2u,2+3u\}.$$

It has a total of $6$ ideals given by
\begin{equation}
I_0=\{0\} \subseteq I_{2u}=2u (\R)=\{0,2u\} \subseteq
I_u,I_2,I_{2+u} \subseteq I_{2,u} \subseteq I_1=\R
\end{equation}
where
\begin{align*}
I_u & = u(\R)=\{0,u,2u,3u\}, \\
I_2 & =2(\R)=\{0,2,2u,2+2u\}, \\
I_{2+u} & =(2+u)(\R)=\{0,2+u,2u,2+3u\} \\
I_{2,u} &= \{0,2,u,2u,3u,2+u,2+2u,2+3u\}.
\end{align*}

It is clear that $\R$ is a local ring with  $I_{2,u}$ as its maximal ideal.  The residue field is given by $(\R)/I_{2,u} \simeq \F_2$. Since $Ann(I_{2,u}) = \{0,2u\}$, and this has dimension $1$ over the residue field, we have from \cite{Wood}
that
\begin{theorem}
$\R$ is a local Frobenius ring.
\end{theorem}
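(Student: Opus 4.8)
The plan is to derive the statement from the structure theory of finite Frobenius rings as packaged in \cite{Wood}, using the criterion that a finite local ring is Frobenius precisely when its socle is a simple module, equivalently when the annihilator of its maximal ideal is one-dimensional over the residue field. Since $\R$ is commutative there is no left/right distinction to track, so the whole argument collapses to two routine verifications: that $\R$ is local with residue field $\F_2$, and that $Ann(I_{2,u})$ is a one-dimensional $\F_2$-space. Both of these are already essentially recorded in the discussion preceding the theorem, so the proof is mainly a matter of assembling them and naming the criterion being applied.

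First I would confirm locality. Writing a generic element as $a+bu$ with $a,b\in\Z_4$, one checks that $a+bu$ is a unit if and only if $a$ is a unit in $\Z_4$, i.e. $a\in\{1,3\}$; this reproduces exactly the eight units listed above, and its complement is $I_{2,u}=2\Z_4+u\Z_4$, which is closed under addition and under multiplication by $\R$. Being the set of all non-units and an ideal, $I_{2,u}$ is the unique maximal ideal, so $\R$ is local, and reduction $a+bu\mapsto a\bmod 2$ identifies $(\R)/I_{2,u}$ with $\F_2$. Next I would compute $Ann(I_{2,u})$: for $x=a+bu$ to annihilate $u\in I_{2,u}$ one needs $au=0$ in $\R\cong\Z_4[x]/(x^2)$, which forces $a=0$; then for $x=bu$ to annihilate $2\in I_{2,u}$ one needs $2bu=0$, which forces $b\in\{0,2\}$. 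A one-line check that $2u$ kills the generators $2$ and $u$ of $I_{2,u}$ then gives $Ann(I_{2,u})=\{0,2u\}$, a two-element module on which $I_{2,u}$ acts trivially, hence a one-dimensional vector space over the residue field $\F_2$; equivalently $\mathrm{soc}(\R)=Ann(I_{2,u})$ is simple.

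With locality, the residue field $\F_2$, and $\dim_{\F_2}Ann(I_{2,u})=1$ in hand, Wood's criterion applies verbatim and yields that $\R$ is Frobenius, so $\R$ is a local Frobenius ring. There is no genuine obstacle in this argument; the only point requiring care is to invoke the socle/annihilator form of the Frobenius condition rather than Nakayama's original permutation-of-primitive-idempotents formulation or the character-module formulation, since it is the socle version that matches the data already computed. (One could alternatively exhibit a generating character of $(\R,+)$ and verify directly that the character module $\widehat{\R}$ is free of rank one over $\R$, but this is strictly more work than the socle computation above.)
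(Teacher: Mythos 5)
Your proposal is correct and follows exactly the route the paper takes: establish that $\R$ is local with maximal ideal $I_{2,u}$ and residue field $\F_2$, compute $Ann(I_{2,u})=\{0,2u\}$, note it is one-dimensional over the residue field, and invoke the socle criterion from \cite{Wood}. You simply supply the routine verifications that the paper leaves implicit.
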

The ideal $\langle 2,u \rangle$ is not principal and the ideals $\langle 2 \rangle$ and $\langle u \rangle $ are not related via inclusion, which means that $\R$ is not a finite chain ring or a principal ideal ring.

We divide the units of $\R$ into two subsets $\mathfrak{U}_1$ and $\mathfrak{U}_2$ calling them units of first type and second type, respectively, as follows:
\begin{equation}
 \mathfrak{U}_1 = \{1,3,1+2u,3+2u\}
\end{equation}
and
\begin{equation}
 \mathfrak{U}_2 = \{1+u,3+u,1+3u,3+3u\}.
\end{equation}
The reason that we distinguish between the units is the following observation that can easily be verified:
\begin{equation}\label{squareelements}
\forall a\in \R, \:\:\:\: a^2 = \left \{
\begin{array}{lll}
0 & \textrm{if $a$ is a non-unit}
\\
1 & \textrm {if $a \in \mathfrak{U}_1$}
\\
1+2u & \textrm {if $a \in \mathfrak{U}_2$.}
\end{array}\right.
\end{equation}

\subsection{Linear Codes over $\Z_4+u\Z_4$, the Lee weight and the Gray map}
\begin{definition}
A linear code $C$ of length $n$ over the ring $\R$ is a
$\R$-submodule of $(\R)^n.$
\end{definition}
Since $\Z_4+u\Z_4$ is not a finite chain ring, we cannot define a standard generating matrix for linear codes over $\R$.

In the case of $\F_2+u\F_2$, the Lee weight was defined in \cite{Dougherty} as
$w(0)=0, w(1)=w(1+u) = 1, w(u)= 2$, and accordingly a
Gray map from $(\F_2+u\F_2)^n$ to $\F_2^{2n}$ was defined by sending
$\overline{a}+u \overline{b}$ to
$(\overline{b},\overline{a}+\overline{b})$ with $\overline{a},\overline{b} \in \F_2^n$.  We will adopt a
similar technique here. We will generalize the Gray map and define the
weight in such a way that will give us a distance preserving
isometry. Define $\phi:(\R)^n \rightarrow \Z_4^{2n}$ by
\begin{equation}
\phi(\overline{a}+u \overline{b}) = (\overline{b},\overline{a}+\overline{b}), \:\:\:\:\: \overline{a}, \overline{b} \in \Z_4^n.
\end{equation}

We now define the Lee weight $w_L$ on $\R$ by letting
$$w_L(a+ub) = w_L((b,a+b)),$$
where $w_L((b,a+b))$ describes the usual Lee weight on $\Z_4^2$. The Lee distance is defined accordingly. Note that with this definition of the Lee weight and the Gray map we have the following main theorem:
\begin{theorem}\label{GrayImage}
$\phi:(\R)^n \rightarrow \Z_4^{2n}$ is a distance preserving linear isometry. Thus, if $C$ is a linear code over $\R$ of length $n$, then $\phi(C)$ is a linear code over $\Z_4$ of length $2n$ and the two codes have the same Lee weight enumerators.
\end{theorem}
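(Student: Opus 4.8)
The plan is to verify the three asserted properties of $\phi$ in turn: $\Z_4$-linearity, the distance-preserving (isometry) property, and then deduce the statements about $\phi(C)$ as formal consequences. First I would check linearity. The map $\phi(\overline{a}+u\overline{b}) = (\overline{b},\overline{a}+\overline{b})$ is clearly additive, since writing a general element of $(\R)^n$ uniquely as $\overline{a}+u\overline{b}$ with $\overline{a},\overline{b}\in\Z_4^n$ is an additive decomposition and each coordinate of the output is a $\Z_4$-linear function of $(\overline{a},\overline{b})$. For $\Z_4$-linearity I would note that for $\lambda\in\Z_4$ we have $\lambda(\overline{a}+u\overline{b}) = (\lambda\overline{a})+u(\lambda\overline{b})$, so $\phi(\lambda(\overline{a}+u\overline{b})) = (\lambda\overline{b},\lambda\overline{a}+\lambda\overline{b}) = \lambda\phi(\overline{a}+u\overline{b})$. (One may remark that $\phi$ is even $\R$-linear if $\Z_4^{2n}$ is given the appropriate $\R$-module structure, but only $\Z_4$-linearity is needed here, and it is immediate.)

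Next I would establish the isometry property, and here almost all the content is definitional. By the definition of $w_L$ on $\R$, we have $w_L(a+ub) = w_L((b,a+b))$ on single coordinates, where the right side is the ordinary $\Z_4$-Lee weight; extending additively over the $n$ coordinates gives $w_L(\overline{a}+u\overline{b}) = w_L((\overline{b},\overline{a}+\overline{b})) = w_L(\phi(\overline{a}+u\overline{b}))$ for all codewords. Since $\phi$ is additive, for any $x,y\in(\R)^n$ the Lee distance satisfies $d_L(x,y) = w_L(x-y) = w_L(\phi(x-y)) = w_L(\phi(x)-\phi(y)) = d_L(\phi(x),\phi(y))$, so $\phi$ is distance preserving. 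In particular $\phi$ is injective (since $w_L(z)=0$ forces $\phi(z)=0$ forces $z=0$ as the $\Z_4$-Lee weight is a genuine weight), hence $\phi$ is a $\Z_4$-linear isometry onto its image.

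Finally I would deduce the statements about codes. If $C\subseteq(\R)^n$ is a linear code, then $\phi(C)$ is the image of a $\Z_4$-submodule under a $\Z_4$-linear map, hence a $\Z_4$-submodule of $\Z_4^{2n}$, i.e.\ a linear $\Z_4$-code of length $2n$. Because $\phi$ restricted to $C$ is a weight-preserving bijection onto $\phi(C)$ (each codeword of $C$ has the same Lee weight as its image), the Lee weight enumerators of $C$ and $\phi(C)$ coincide term by term. I do not expect a genuine obstacle in this proof: every step reduces to the uniqueness of the representation $\overline{a}+u\overline{b}$ and to unwinding the definitions of $\phi$ and $w_L$; the only point requiring a word of care is confirming injectivity of $\phi$, which follows at once from the fact that the $\Z_4$-Lee weight vanishes only on the zero vector.
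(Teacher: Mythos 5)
Your proof is correct and matches the paper's intent exactly: the paper omits a proof entirely, treating the theorem as an immediate consequence of having defined $w_L$ on $\R$ precisely so that $w_L(a+ub)=w_L(\phi(a+ub))$, and your write-up simply spells out that definitional unwinding together with the routine checks of $\Z_4$-linearity and injectivity. No gaps.
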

Throughout the paper we will use the notation $w_L$ to denote the Lee weight on $\R$, as well as $\Z_4$ and $\F_2+u\F_2$.
\section{The Dual, The Complete Weight Enumerator and MacWilliams Identities}
\subsection{The Dual of linear codes over $\R$}
First take the Euclidean inner product on $(\R)^n$ by taking
\begin{equation}
\langle(x_1, x_2, \dots, x_n),(y_1,y_2, \dots, y_n)\rangle = x_1y_1+x_2y_2+
\dots + x_ny_n
\end{equation}
where the operations are performed in the ring $\R$.

We are now ready to define the dual of a linear code $C$ over
$\R$:
\begin{definition}
Let $C$ be a linear code over $\R$ of length $n$, then we define
the {\it dual} of $C$ as
$$C^{\perp}:= \large\{ \overline{y} \in (\R)^n \large | \langle\overline{y},\overline{x}\rangle = 0, \:\:\:\: \forall
\overline{x} \in C \large \}.$$
\end{definition}

Note that from the definition of the inner product, it is obvious
that $C^{\perp}$ is also a linear code over $\R$ of length $n$. Since $\R$ is a frobenius ring we also have $|C|\cdot|C^{\perp}| = 16^n$.

We next study the MacWilliams identities for codes over $\R$:

\subsection{The Complete Weight Enumerator and MacWilliams Identities}
Let $\R = \{ g_1,g_2, \dots, g_{16} \}$ be given as
$$\R=\{0,u,2u,3u,1,1+u,1+2u,1+3u,2,2+u,2+2u,2+3u,3,3+u,3+2u,3+3u\}.$$
\begin{definition}
The complete weight enumerator of a linear code $C$ over $\R$ is defined as
$$cwe_C(X_1, X_2, \dots,X_{16}) = \sum_{\overline{c} \in C} \large ( X_1^{n_{g_1}(\overline{c})}X_2^{n_{g_2}(\overline{c})} \dots
X_{16}^{n_{g_{16}}(\overline{c})}\large ) $$
\end{definition}
where $n_{g_i}(\overline{c})$ is the number of appearances of
$g_i$ in the vector $\overline{c}$.
\begin{remark}
Note that $cwe_C(X_1, X_2, \dots,X_{16})$ is a homogeneous
polynomial in $16$ variables with the total degree of each monomial
being $n$, the length of the code. Since $\overline{0} \in C$, we
see that the term $X_1^{n}$ always appears in $cwe_C(X_1, X_2,
\dots,X_{16})$.
\end{remark}

We also observe that
\begin{equation}
cwe_C(1, 1, \dots,1) = |C|,
\end{equation}
and
\begin{equation}
cwe_C(a, 0, \dots,0) = a^n.
\end{equation}

The complete weight enumerator gives us a lot of information about
the code.

Now, since $\R$ is a Frobenius ring, the MacWilliams identities for the complete weight enumerator hold. To find the exact identities we define the following character on $\Z_4+u\Z_4:$
\begin{definition}
Define $\chi: \R \rightarrow \mathbb{C}^{\times}$ by
$$\chi(a+bu) = i^{a+b}.$$
\end{definition}
It is easy to verify that $\phi$ is a non-trivial character when restricted to each non-zero ideal, hence it is a generating character for $\R$.

Then we make up the $16 \times 16$ matrix $T$, by letting $T(i,j) = \chi(g_ig_j).$ The matrix $T$ is given as follows:
$$T=\left [
\begin{array}{cccccccccccccccc}
1& 1& 1& 1& 1& 1& 1& 1& 1& 1& 1& 1& 1& 1& 1& 1 \\
1& 1& 1& 1& i& i& i& i& -1& -1& -1& -1& -i& -i& -i& -i\\
1& 1& 1& 1& -1& -1& -1& -1& 1& 1& 1& 1& -1& -1& -1& -1\\
1& 1& 1& 1& -i& -i& -i& -i& -1&-1& -1& -1& i& i& i& i\\
1& i& -1& -i& i& -1& -i& 1& -1& -i& 1& i& -i& 1& i& -1\\
1&i& -1& -i& -1& -i& 1& i& 1& i& -1& -i& -1& -i& 1&i\\
1& i& -1& -i& -i& 1& i& -1& -1& -i& 1& i& i& -1& i& 1\\
1& i& -1& -i& 1& i& -1& -i& 1& i& -1& -i& 1& i& -1& i \\
1& -1& 1& -1& -1& 1& -1& 1& 1& -1& 1& -1& -1& 1& -1& 1 \\
1& -1& 1& -1& -i& i& -i& i& -1& 1& -1& 1& i& -i& i& -i \\
1& -1& 1& -1& 1& -1& 1& -1& 1& -1& 1& -1& 1& -1& 1& -1\\
1& -1& 1& -1& i& -i& i& -i& -1& 1& -1& 1& -i& i& -i& i\\
1&-i& -1& i& -i& -1& i& 1& -1& i& 1& -i& i& 1& -i& -1\\
1& -i& -1& i& 1& -i& -1& i& 1& -i& -1& i& 1& -i& -1& i\\
1& -i& -1& i& i& 1& i& -1& -1& i& 1& -i& -i& -1& i& 1 \\
1& -i& -1& i& -1& i& 1& -i& 1& -i& -1& i& -1& i& 1& -i
\end{array}
\right].$$

The following theorem then follows from \cite{Wood} quite easily:
\begin{theorem}\label{cweMacWilliams}
Let $C$ be a linear code over $\R$ of length $n$ and suppose $C^{\perp}$ is its dual. Then we have
$$cwe_{C^{\perp}}(X_1,X_2, \dots, X_{16}) = \frac{1}{|C|}cwe_C(T\cdot (X_1,X_2, \dots, X_{16})^t),$$
where $()^t$ denotes the transpose.
\end{theorem}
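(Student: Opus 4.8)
The plan is to realize this as an instance of the general MacWilliams identity for finite Frobenius rings (see \cite{Wood}), for which the only ring-specific ingredient is a generating character; here that role is played by $\chi$. So the first task is to confirm that $\chi$ is genuinely a generating character. Note first that $\chi$ is a well-defined additive homomorphism $\R \to \mathbb{C}^{\times}$, since $i^{a+b}$ depends only on $a+b$ modulo $4$. Because $\R$ is local with residue field $\F_2$, it has a unique minimal ideal, namely the socle $\mathrm{Ann}(I_{2,u}) = I_{2u} = \{0,2u\}$, and every nonzero ideal of $\R$ contains it; hence $\ker\chi$ contains a nonzero ideal if and only if it contains $I_{2u}$, that is, if and only if $\chi(2u)=1$. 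But $\chi(2u)=i^{0+2}=-1\neq 1$, so $\chi$ is a generating character (and thus the pairing $(\overline{y},\overline{c})\mapsto\chi(\langle\overline{y},\overline{c}\rangle)$ on $(\R)^n\times(\R)^n$ is nondegenerate).

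For the identity itself I would give the short Poisson-summation argument. For $\overline{c}=(c_1,\dots,c_n)\in(\R)^n$ write $f(\overline{c})=\prod_{k=1}^n X_{\iota(c_k)}$, where $\iota(g_j)=j$, so that $cwe_C(X_1,\dots,X_{16})=\sum_{\overline{c}\in C}f(\overline{c})$. Form the Hadamard transform
\begin{equation*}
\widehat{f}(\overline{y}) = \sum_{\overline{c}\in(\R)^n}\chi\big(\langle\overline{y},\overline{c}\rangle\big)\,f(\overline{c}),
\end{equation*}
which factors over the coordinates into $\prod_{k=1}^n\big(\sum_{i=1}^{16}\chi(y_k g_i)X_i\big)$. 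Since $\R$ is commutative, $T$ is symmetric and its $j$-th row dotted with $(X_1,\dots,X_{16})^t$ is $\sum_i\chi(g_j g_i)X_i$; hence if $y_k=g_j$, the $k$-th factor above is precisely the $j$-th entry of $T\cdot(X_1,\dots,X_{16})^t$. Therefore $\widehat{f}(\overline{y})$ is simply the monomial $f(\overline{y})$ evaluated at $X=T\cdot(X_1,\dots,X_{16})^t$, and summing over $\overline{c}\in C$ gives $\sum_{\overline{c}\in C}\widehat{f}(\overline{c}) = cwe_C\big(T\cdot(X_1,\dots,X_{16})^t\big)$.

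Finally I would apply Poisson summation for the finite abelian group $(\R)^n$ and its subgroup $C$: interchanging the order of summation,
\begin{equation*}
\sum_{\overline{c}\in C}\widehat{f}(\overline{c}) = \sum_{\overline{y}\in(\R)^n}f(\overline{y})\sum_{\overline{c}\in C}\chi\big(\langle\overline{y},\overline{c}\rangle\big) = |C|\sum_{\overline{y}\in C^{\perp_{\chi}}}f(\overline{y}),
\end{equation*}
because for fixed $\overline{y}$ the map $\overline{c}\mapsto\chi(\langle\overline{y},\overline{c}\rangle)$ is a character of $(C,+)$, whose sum over $C$ equals $|C|$ when it is trivial and $0$ otherwise; here $C^{\perp_{\chi}}=\{\overline{y}:\chi(\langle\overline{y},\overline{c}\rangle)=1\text{ for all }\overline{c}\in C\}$. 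The one genuine point, and the step I expect to require the most care, is identifying $C^{\perp_{\chi}}$ with the code-theoretic dual $C^{\perp}$: if $\overline{c}\in C$ then $r\overline{c}\in C$ for every $r\in\R$, so $\chi$ vanishes on the whole principal ideal generated by $\langle\overline{y},\overline{c}\rangle$, and since $\chi$ is generating this forces $\langle\overline{y},\overline{c}\rangle=0$; the reverse inclusion is immediate. Combining the two displayed equations gives $cwe_C\big(T\cdot(X_1,\dots,X_{16})^t\big)=|C|\,cwe_{C^{\perp}}(X_1,\dots,X_{16})$, which is the asserted identity after dividing by $|C|$. (Alternatively, one can quote the Frobenius-ring MacWilliams theorem of \cite{Wood} directly, observing that the transform matrix there is exactly $\big(\chi(g_i g_j)\big)_{i,j}=T$.)
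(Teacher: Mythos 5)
Your proof is correct and follows essentially the same route as the paper, which simply asserts that $\chi$ is a generating character and then cites Wood's MacWilliams theorem for Frobenius rings. You supply the details the paper delegates to the citation — the socle argument showing $\ker\chi$ contains no nonzero ideal, the discrete Poisson summation, and the identification of the character-theoretic annihilator with $C^{\perp}$ — all of which are accurate.
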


\subsection{The Symmetrized Weight enumerator and The Lee Weight enumerator}
Since in $\Z_4$, $w_L(1) = w_L(3) = 1$, the symmetrized weight enumerator for codes over $\Z_4$ was defined as
$$swe_C(X,Y,Z) = cwe_C(X,Y,Z,Y).$$
Adopting the same idea, we will define the symmetrized weight enumerator of codes over $\R$. To do this we need the following table which gives us the elements of $\R$, their Lee weights and the corresponding variables:

\bigskip
\noindent

\begin{center}
Table 1: The Lee Weights of the elements of $\R$.

\medskip

\begin{tabular}
[l]{|c|c|c|}\hline $a$  &
 Lee Weight of $a$ &  The corresponding variable
\\ \hline $0$ & $0$ & $X_1$
\\ \hline $u$ & $2$ & $X_2$
\\ \hline $2u$ & $4$ & $X_3$
\\ \hline $3u$ & $2$ & $X_4$
\\ \hline $1$ & $1$ & $X_5$
\\ \hline $1+u$ & $3$ & $X_6$
\\ \hline $1+2u$ & $3$ & $X_7$
\\ \hline $1+3u$ & $1$ & $X_8$
\\ \hline $2$ & $2$ & $X_9$
\\ \hline $2+u$ & $2$ & $X_{10}$
\\ \hline $2+2u$ & $2$ & $X_{11}$
\\ \hline $2+3u$ & $2$ & $X_{12}$
\\ \hline $3$ & $1$ & $X_{13}$
\\ \hline $3+u$ & $1$ & $X_{14}$
\\ \hline $3+2u$ & $3$ & $X_{15}$
\\ \hline $3+3u$ & $3$ & $X_{16}$
\\ \hline
\end{tabular}
\end{center}

\medskip
So, looking at the elements that have the same weights we can define the symmetrized weight enumerator as follows:
\begin{definition}
Let $C$ be a linear code over $\R$ of length $n$. Then define the symmetrized weight enumerator of $C$ as
\begin{equation}
swe_C(X,Y,Z,W,S) = cwe_C(X,S,Y,S,W,Z,Z,W,S,S,S,S,W,W,Z,Z).
\end{equation}
\end{definition}

Here $X$ represents the elements that have weight $0$ (the 0 element); $Y$ represents the elements with weight $4$ (the element $2u$);
$Z$ represents the elements of weight $3$ (the elements $1+u$, $1+2u$, $3+2u$ and $3+3u$); $W$ represents the elements of weight $1$ (the elements $1$, $1+3u$, $3$ and $3+u$) and finally $S$ represents the elements of weight $2$ (the elements $2$, $u$, $3u$, $2+u$, $2+2u$ and $2+3u$).

Now, combining Theorem \ref{cweMacWilliams} and the definition of the symmetrized weight enumerator, we obtain the following theorem:
\begin{theorem}\label{sweMacWilliams}
Let $C$ be a linear code over $\R$ of length $n$ and let $C^{\perp}$ be its dual. Then we have
$$swe_{C^{\perp}}(X,Y,Z,W,S) = $$
$$\frac{1}{|C|}swe_C(6S+4W+X+Y+4Z, 6S-4W+X+Y-4Z, -2W+X-Y+2Z, 2W+X-Y-2Z, -2S+X+Y).$$
\end{theorem}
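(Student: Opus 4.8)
The plan is to deduce the formula directly from the complete weight enumerator identity of Theorem~\ref{cweMacWilliams} by specializing variables. By the very definition of the symmetrized weight enumerator,
$$swe_{C^\perp}(X,Y,Z,W,S)=cwe_{C^\perp}(X,S,Y,S,W,Z,Z,W,S,S,S,S,W,W,Z,Z);$$
write $\mathbf v=(X,S,Y,S,W,Z,Z,W,S,S,S,S,W,W,Z,Z)^t$ for this substitution vector, and note that its $j$-th entry depends only on the Lee weight $w_L(g_j)$, exactly as prescribed by Table~1. Applying Theorem~\ref{cweMacWilliams} then gives
$$swe_{C^\perp}(X,Y,Z,W,S)=\frac{1}{|C|}\,cwe_C\!\left(T\mathbf v\right).$$
Hence everything reduces to the following claim about the vector $T\mathbf v$: its $i$-th coordinate depends only on $w_L(g_i)$, and for the five possible Lee weights $0,4,3,1,2$ it equals, respectively, the linear forms $6S+4W+X+Y+4Z$, $6S-4W+X+Y-4Z$, $-2W+X-Y+2Z$, $2W+X-Y-2Z$, $-2S+X+Y$. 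Granting this claim, $cwe_C(T\mathbf v)$ is, by the definition of $swe_C$, precisely $swe_C$ evaluated at those five forms, which is the asserted right-hand side.

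To establish the claim I would write the $i$-th coordinate of $T\mathbf v$ as $\sum_{j=1}^{16}\chi(g_ig_j)v_j=\sum_{w}\bigl(\sum_{j:\,w_L(g_j)=w}\chi(g_ig_j)\bigr)X_{(w)}$, where $X_{(w)}$ is the variable assigned to Lee weight $w$, so that everything comes down to the character sums $\sigma_w(g_i):=\sum_{w_L(g_j)=w}\chi(g_ig_j)$. The coefficients of $X$ and $Y$ are immediate: the $0$-column of $T$ contributes $\chi(0)=1$, and if $g_i=a+bu$ the $2u$-column contributes $\chi(2au)=(-1)^a$, which is $+1$ for the weights $0,2,4$ and $-1$ for the weights $1,3$ (one checks from Table~1 that $a$ is even exactly for the weight-$0,2,4$ elements). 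This leaves only $\sigma_w(g_i)$ for the three larger Lee classes. Here the key structural remark is that each Lee-weight class of $\R$ is closed under negation — e.g. the weight-$1$ class $\{1,3,1+3u,3+u\}$ since $3=-1$ and $3+u=-(1+3u)$, and similarly for the others — so, since $\chi(-g_ig_j)=\overline{\chi(g_ig_j)}$, each $\sigma_w(g_i)$ is a real number; this is why no $\pm i$ survives in $T\mathbf v$. The remaining, and genuinely essential, point is that $\sigma_w(g_i)$ depends only on $w_L(g_i)$: this I would verify directly from the displayed matrix $T$, summing each of its $16$ rows over the five column-blocks indexed by a fixed Lee weight, and observing that the block-sums collapse to the five claimed combinations and agree for rows attached to elements of equal Lee weight. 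Conceptually this says the partition of $\R$ by Lee weight is compatible with the Fourier/MacWilliams transform encoded by $T$; concretely it is a finite check best organized by running through the classes $\{0\}$, $\{2u\}$, and the weight-$1$, weight-$2$, weight-$3$ classes in turn.

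The only real obstacle is bookkeeping: one has to carry out the sixteen inner products making up $T\mathbf v$, each a sum of sixteen terms with entries in $\{\pm1,\pm i\}$, and confirm the fivefold collapse; beyond the reduction above there is no conceptual difficulty. A convenient consistency check is the specialization $X=Y=Z=W=S=1$, under which the five linear forms take the values $16,0,0,0,0$, so the right-hand side becomes $\tfrac{1}{|C|}swe_C(16,0,0,0,0)=\tfrac{1}{|C|}\cdot 16^{\,n}=|C^\perp|=swe_{C^\perp}(1,1,1,1,1)$, as it must be; in particular the all-ones first row of $T$ does produce the first form $6S+4W+X+Y+4Z$.
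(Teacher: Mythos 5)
Your proof is correct and follows essentially the same route as the paper, which in fact gives no details beyond the sentence ``combining Theorem \ref{cweMacWilliams} and the definition of the symmetrized weight enumerator''; your reduction to the five character sums $\sigma_w(g_i)$ and the observation that the coefficient vector $\mathbf v$ depends only on Lee weight is exactly the intended argument. One caution on the final finite check: do it from the definition $T(i,j)=\chi(g_ig_j)$ rather than from the matrix as displayed in the paper, which contains a few typos (e.g.\ the entry in row $7$, column $15$ should be $\chi((1+2u)(3+2u))=\chi(3)=-i$, not $i$, and similarly in row $8$, column $16$); with the corrected entries every row collapses to the claimed linear form for its Lee-weight class.
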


We next define the Lee weight enumerator of a code over $\R$:
\begin{definition}
Let $C$ be a linear code over $\Z_4$. Then the Lee weight enumerator of $C$ is given by
\begin{equation}\label{Leeenum}
Lee_C(W,X) = \sum_{\overline{c} \in C}W^{4n-w_L(\overline{c})}X^{w_L(\overline{c})}.
\end{equation}
\end{definition}
Considering the weights that the variables $X,Y,Z,W,S$ of the symmetrized weight enumerator represent, we easily get the following theorem:
\begin{theorem}\label{swelee}
Let $C$ be a linear code over $\R$ of length $n$.
Then
$$Lee_C(W,X) = swe_C(W^4, X^4, WX^3,W^3X,W^2X^2). $$
\end{theorem}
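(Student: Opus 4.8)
The plan is to prove the identity by a direct, codeword-by-codeword comparison of monomials, so that nothing beyond the definitions of $swe_C$ and $Lee_C$ is needed. First I would unwind the definition of the symmetrized weight enumerator. Since $swe_C(X,Y,Z,W,S)$ is obtained from $cwe_C$ by replacing each variable $X_i$ with one of $X,Y,Z,W,S$ according to the Lee weight of $g_i$ recorded in Table 1 (weight $0\mapsto X$, weight $4\mapsto Y$, weight $3\mapsto Z$, weight $1\mapsto W$, weight $2\mapsto S$), the monomial $\prod_{i=1}^{16}X_i^{n_{g_i}(\overline{c})}$ contributed by a fixed codeword $\overline{c}$ collapses to $X^{a_0}Y^{a_4}Z^{a_3}W^{a_1}S^{a_2}$, where $a_k=a_k(\overline{c})$ denotes the number of coordinates of $\overline{c}$ of Lee weight $k$. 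Hence
$$swe_C(X,Y,Z,W,S)=\sum_{\overline{c}\in C}X^{a_0}Y^{a_4}Z^{a_3}W^{a_1}S^{a_2}.$$

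Next I would carry out the substitution prescribed in the statement, being careful that the letters $W$ and $X$ now play a double role: on the input side they are the five $swe$-variables, on the output side they are the two $Lee$-variables. The fact to isolate is that, under this substitution, the variable associated with Lee weight $k$ is sent to $W^{\,4-k}X^{\,k}$ for every $k\in\{0,1,2,3,4\}$; this is a line-by-line check against Table 1 ($X\mapsto W^4$, $W\mapsto W^3X$, $S\mapsto W^2X^2$, $Z\mapsto WX^3$, $Y\mapsto X^4$). Consequently the monomial of a codeword $\overline{c}$ becomes
$$\prod_{k=0}^{4}\bigl(W^{\,4-k}X^{\,k}\bigr)^{a_k}=W^{\sum_{k=0}^{4}(4-k)a_k}\,X^{\sum_{k=0}^{4}k\,a_k}.$$
Since every coordinate of $\overline{c}$ has exactly one Lee weight, $\sum_k a_k=n$, and since the Lee weight on $\R$ is additive over coordinates, $\sum_k k\,a_k=w_L(\overline{c})$. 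Thus the exponent of $X$ equals $w_L(\overline{c})$ and the exponent of $W$ equals $4n-w_L(\overline{c})$. Summing over all $\overline{c}\in C$ produces $\sum_{\overline{c}\in C}W^{4n-w_L(\overline{c})}X^{w_L(\overline{c})}$, which is $Lee_C(W,X)$ by \eqref{Leeenum}, completing the argument.

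I do not expect a genuine obstacle here: the theorem is essentially bookkeeping resting on the additivity of the Lee weight over coordinates. The only points that demand attention are the notational clash between the two enumerators' variables and the verification that the single rule ``weight-$k$ variable $\mapsto W^{4-k}X^k$'' indeed covers all five substituted variables — a short comparison with Table 1. Everything else is immediate from the definition of $cwe_C$ and the way $swe_C$ was built from it.
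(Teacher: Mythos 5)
Your proof is correct and is precisely the bookkeeping argument the paper has in mind when it says the theorem follows "easily" from considering the weights that the variables $X,Y,Z,W,S$ represent; the paper simply omits the details you supply. The substitution check against Table 1 and the use of $\sum_k a_k = n$ together with $\sum_k k\,a_k = w_L(\overline{c})$ are exactly right.
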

Now combining Theorem \ref{sweMacWilliams} and Theorem \ref{swelee} we obtain the following theorem:
\begin{theorem}\label{leeMacWilliams}
Let $C$ be a linear code over $\R$ of length $n$ and $C^{\perp}$ be its dual. With $Lee_C(W,X)$ denoting its  Lee weight enumerator as was given in (\ref{Leeenum}), we have
$$Lee_{C^{\perp}}(W,X) = \frac{1}{|C|}Lee_C(W+X,W-X).$$
\end{theorem}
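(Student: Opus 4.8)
The plan is to deduce the Lee–enumerator MacWilliams identity purely formally from the two facts already in hand: Theorem~\ref{swelee}, which expresses $Lee$ as a monomial specialization of $swe$, and Theorem~\ref{sweMacWilliams}, the MacWilliams transform for $swe$. No further structural input about $\R$ is required.

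First I would apply Theorem~\ref{swelee} to the dual code, writing
\[
Lee_{C^{\perp}}(W,X) = swe_{C^{\perp}}\!\left(W^4,\,X^4,\,WX^3,\,W^3X,\,W^2X^2\right).
\]
Next I would feed this particular choice of the five $swe$-arguments into Theorem~\ref{sweMacWilliams}. To avoid a clash of notation, I temporarily call the $swe$-variables $X',Y',Z',W',S'$; the substitution is $X'=W^4$, $Y'=X^4$, $Z'=WX^3$, $W'=W^3X$, $S'=W^2X^2$. The right-hand side of Theorem~\ref{sweMacWilliams} then becomes $\tfrac{1}{|C|}$ times $swe_C$ evaluated at the five polynomials
\[
6S'+4W'+X'+Y'+4Z',\quad 6S'-4W'+X'+Y'-4Z',\quad -2W'+X'-Y'+2Z',\quad 2W'+X'-Y'-2Z',\quad -2S'+X'+Y'
\]
specialized according to that substitution.

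The core step is the verification, via the binomial theorem, that after the substitution these five polynomials are exactly
\[
(W+X)^4,\quad (W-X)^4,\quad (W+X)(W-X)^3,\quad (W+X)^3(W-X),\quad (W+X)^2(W-X)^2.
\]
For instance, $W^4+4W^3X+6W^2X^2+4WX^3+X^4=(W+X)^4$, $W^4-4W^3X+6W^2X^2-4WX^3+X^4=(W-X)^4$, and $W^4-2W^2X^2+X^4=(W+X)^2(W-X)^2$, with the two mixed forms checked similarly. Granting these identities, the five arguments of $swe_C$ are precisely $U^4,\,V^4,\,UV^3,\,U^3V,\,U^2V^2$ with $U=W+X$ and $V=W-X$, so a second application of Theorem~\ref{swelee} — now read from right to left with the pair $(U,V)$ in place of $(W,X)$ — collapses $swe_C(\dots)$ back to $Lee_C(W+X,W-X)$. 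This yields $Lee_{C^{\perp}}(W,X)=\tfrac{1}{|C|}Lee_C(W+X,W-X)$, as claimed.

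I expect the only real obstacle to be bookkeeping rather than mathematics: keeping the two roles of the symbols $W$ and $X$ (as $swe$-variables versus $Lee$-variables) distinct, and making sure the orderings of the five variables in $swe$ and of the sixteen variables underlying it match the conventions fixed in Table~1 and in the definitions of $cwe$, $swe$ and $Lee$. Once the five binomial identities above are confirmed, the theorem follows with no additional ideas about $\R$ or about Frobenius duality beyond Theorems~\ref{cweMacWilliams}, \ref{sweMacWilliams} and \ref{swelee}.
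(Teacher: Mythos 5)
Your proposal is correct and follows essentially the same route as the paper: the paper's proof also specializes Theorem~\ref{sweMacWilliams} via the substitution of Theorem~\ref{swelee} and then verifies exactly the same five binomial identities $(W+X)^4$, $(W-X)^4$, $(W+X)(W-X)^3$, $(W+X)^3(W-X)$, $(W+X)^2(W-X)^2$. Your write-up merely makes the bookkeeping of the two roles of $W,X$ more explicit than the paper does.
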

\begin{proof}
Looking at Theorem \ref{sweMacWilliams} and Theorem \ref{swelee}, the proof is complete after observing the following identities:
$$6W^2X^2+4W^3X+W^4+X^4+4WX^3 = (W+X)^4,$$
$$6W^2X^2-4W^3X+W^4+X^4-4WX^3 = (W-X)^4,$$
$$-2W^3X+W^4-X^4+2WX^3 = (W+X)(W-X)^3, $$
$$2W^3X+W^4-X^4-2WX^3 = (W+X)^3(W-X),$$
and finally
$$-2W^2X^2+W^4+X^4 = (W+X)^2(W-X)^2.$$
\end{proof}

\section{Self-dual Codes over $\R$, Projections, lifts and the $\Z_4$-images}

We start by recalling that a linear code $C$ over $\R$ is called {\it self-orthogonal} if $C \subseteq C^{\perp}$ and it will be called {\it self-dual} if $C=C^{\perp}$.

Since the code of length $1$ generated by $u$ is a self-dual code over $\R$, by taking the direct sums, we see that
\begin{theorem}
Self-dual codes over $\R$ of any length exist.
\end{theorem}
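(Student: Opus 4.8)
The plan is to verify the two ingredients the statement rests on and then assemble them. First I would check that the length-one code $C = \langle u \rangle = I_u = \{0, u, 2u, 3u\}$ is self-dual over $\R$. For self-orthogonality I compute the pairwise products $u \cdot u = u^2 = 0$, $u \cdot 2u = 2u^2 = 0$, $u \cdot 3u = 3u^2 = 0$, and $2u \cdot 2u = 4u^2 = 0$ (indeed every product of two elements of $I_u$ lands in $u^2\R = 0$), so $\langle \overline{x}, \overline{y}\rangle = 0$ for all $\overline{x},\overline{y} \in C$, giving $C \subseteq C^\perp$. For the reverse inclusion I would use the Frobenius cardinality identity quoted in the excerpt: $|C|\cdot|C^\perp| = 16^1 = 16$, and since $|C| = 4$ we get $|C^\perp| = 4 = |C|$, so $C \subseteq C^\perp$ forces $C = C^\perp$. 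Thus $\langle u\rangle$ is a self-dual code over $\R$ of length $1$.

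Next I would argue that direct sums preserve self-duality. If $C_1$ and $C_2$ are linear codes over $\R$ of lengths $n_1$ and $n_2$, then $(C_1 \oplus C_2)^\perp = C_1^\perp \oplus C_2^\perp$, which follows immediately from the coordinatewise definition of the Euclidean inner product: a vector $(\overline{y}_1, \overline{y}_2)$ pairs to zero with every $(\overline{x}_1, \overline{x}_2) \in C_1 \oplus C_2$ if and only if $\langle \overline{y}_1, \overline{x}_1\rangle = -\langle \overline{y}_2, \overline{x}_2\rangle$ holds for all such pairs, and taking $\overline{x}_2 = \overline{0}$ (resp. $\overline{x}_1 = \overline{0}$) shows each side must vanish separately. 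Hence if $C_1 = C_1^\perp$ and $C_2 = C_2^\perp$ then $C_1 \oplus C_2 = (C_1 \oplus C_2)^\perp$.

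Finally I would conclude: for any positive integer $n$, the $n$-fold direct sum $\langle u\rangle^{\oplus n}$ is a self-dual code over $\R$ of length $n$. (One could also phrase this as the code of length $n$ generated by the matrix $u I_n$.) So self-dual codes over $\R$ exist in every length.

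I do not anticipate a genuine obstacle here; the only point requiring a little care is making sure the reverse inclusion $C^\perp \subseteq C$ is justified rather than merely the self-orthogonality $C \subseteq C^\perp$, and that is handled cleanly by the Frobenius identity $|C|\cdot|C^\perp| = 16^n$. The rest is a routine induction (or direct verification) on the behavior of duals under direct sums.
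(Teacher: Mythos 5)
Your proposal is correct and follows exactly the paper's route: the paper likewise observes that the length-one code $\langle u\rangle$ is self-dual and then takes direct sums, leaving the verification implicit. You have simply filled in the details (self-orthogonality from $u^2=0$ and the cardinality count $|C|\cdot|C^\perp|=16$), and those details are right.
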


We next observe:
\begin{theorem}{\bf (i)} If $C$ is self-orthogonal, then for every codeword $\overline{c} \in C$, $n_{\mathfrak{U}_i}(\overline{c})$ must be even. Here, $n_{\mathfrak{U}_i}(\overline{c})$ denotes the number of units of the $i$th type(in $\mathfrak{U}_i$) that appear in $\overline{c}$
\par {\bf (ii)} If $C$ is self-dual of length $n$, then the all $2u$-vector of length $n$ must be in $C$.
\end{theorem}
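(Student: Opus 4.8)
The plan is to obtain both parts from the self-orthogonality condition $\langle \overline{c},\overline{c}\rangle = 0$ together with the explicit description of squares in $\R$ recorded in~(\ref{squareelements}).

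For part (i), fix a codeword $\overline{c} = (c_1,\dots,c_n)\in C$. Since $C\subseteq C^{\perp}$, we have $\langle \overline{c},\overline{c}\rangle = \sum_{j=1}^{n} c_j^{2} = 0$ in $\R$. By~(\ref{squareelements}), $c_j^{2}$ equals $0$ when $c_j$ is a non-unit, $1$ when $c_j\in\mathfrak{U}_1$, and $1+2u$ when $c_j\in\mathfrak{U}_2$. Setting $a = n_{\mathfrak{U}_1}(\overline{c})$ and $b = n_{\mathfrak{U}_2}(\overline{c})$, the sum collapses to $a\cdot 1 + b\cdot(1+2u) = (a+b) + (2b)u = 0$ in $\R$. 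Separating the $\Z_4$-part from the $u\Z_4$-part gives the two congruences $a+b\equiv 0\pmod 4$ and $2b\equiv 0\pmod 4$. The second forces $b$ to be even, and then the first forces $a$ to be even, which is the assertion.

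For part (ii), assume $C = C^{\perp}$ and let $\overline{w} = (2u,2u,\dots,2u)$. By self-duality it is enough to show $\overline{w}\in C^{\perp}$, that is, $\langle \overline{w},\overline{c}\rangle = 0$ for every $\overline{c}\in C$. Here $\langle \overline{w},\overline{c}\rangle = 2u\sum_{j=1}^{n} c_j$, and a one-line check over the element list of $\R$ (using $2u(a+bu) = 2au$, which vanishes exactly when $a$ is even) shows that $2u\cdot x = 0$ for every non-unit $x$ while $2u\cdot x = 2u$ for every unit $x$. Hence $2u\sum_j c_j = 2u\cdot m$, where $m = n_{\mathfrak{U}_1}(\overline{c}) + n_{\mathfrak{U}_2}(\overline{c})$ is the number of unit entries of $\overline{c}$. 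Since a self-dual code is self-orthogonal, part (i) shows both $n_{\mathfrak{U}_1}(\overline{c})$ and $n_{\mathfrak{U}_2}(\overline{c})$ are even, so $m$ is even and $2u\cdot m = 0$. Therefore $\overline{w}\in C^{\perp} = C$.

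The only calculations involved are the two element-wise ones — the values of $x^{2}$ (already tabulated in~(\ref{squareelements})) and the values of $2u\cdot x$ for $x\in\R$ — both entirely routine. There is no serious obstacle; the one point needing care is the parity bookkeeping in part (i): the identity lives in $\R$, which has characteristic $4$, so one must genuinely extract the $\Z_4$-coordinate condition $a+b\equiv 0\pmod 4$ together with the $u$-coordinate condition $2b\equiv 0\pmod 4$ and combine them, rather than reducing modulo $2$ at the outset.
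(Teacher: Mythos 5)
Your proof is correct and follows essentially the same route as the paper: both parts reduce $\langle\overline{c},\overline{c}\rangle=0$ via the square table~(\ref{squareelements}) to $(a+b)+2bu=0$ in $\R$, extract the parity of $n_{\mathfrak{U}_2}$ and then of $n_{\mathfrak{U}_1}$, and in part (ii) use that $2u$ annihilates non-units and fixes units so that $\langle\overline{2u},\overline{c}\rangle$ is an even multiple of $2u$. Your write-up is in fact slightly more careful than the paper's in separating the $\Z_4$- and $u$-coordinates explicitly.
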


\begin{proof}
{\bf (i)} If $C$ is self-orthogonal, then $\langle\overline{c}, \overline{c}\rangle = 0$ for all $\overline{c} \in C$. But by \ref{squareelements} we see that
$$\langle\overline{c}, \overline{c}\rangle = n_{\mathfrak{U}_1}(\overline{c})+n_{\mathfrak{U}_2}(\overline{c})+n_{\mathfrak{U}_2}(\overline{c})\cdot 2u  = 0$$ in $\R$ implies that $n_{\mathfrak{U}_2}(\overline{c})$ must be even and since $n_{\mathfrak{U}_1}(\overline{c})+n_{\mathfrak{U}_2}(\overline{c})$ must also be even, we see that $n_{\mathfrak{U}_1}(\overline{c})$ is also even.
\par{\bf (ii)} If $C$ is self-dual, then again $\langle \overline{c}, \overline{c}\rangle = 0$ for all $\overline{c} \in C$ and so by the above equation, the number of units in $\overline{c}$, which we denote by $n_U(\overline{c})$ must be even. From the ring structure we see that $unit\cdot(2u) = 2u$ and $(non-unit)\cdot (2u) = 0$, thus if we denote the all $2u$-vector of length $n$ by $\overline{2u}$, then
$$\langle\overline{c}, \overline{2u}\rangle = n_U(\overline{c})\cdot (2u) = 0$$ in $\R$ since $n_U(\overline{c})$ is even. This shows that $\overline{2u} \in C^{\perp} = C$ since $C$ is self-dual.
\end{proof}

Define two maps from $(\R)^n$ to $\Z_4^n$ as follows:
\begin{equation}
\mu(\overline{a}+u\overline{b}) = \overline{a}
\end{equation}
and
\begin{equation}
\nu(\overline{a}+u\overline{b}) = \overline{b}.
\end{equation}
Note that $\mu$ is a projection of $\R$ to $\Z_4$. We can define another projection by defining $\alpha:\R \rightarrow \F_2+u\F_2$ by reducing elements of $\Z_4+u\Z_4$ modulo $2$. The map $\alpha$ can be extended linearly like $\mu$. Any linear code over $\R$ has two projections defined in this way.
Since these maps are linear, we see that
\begin{theorem}
If $C$ is a linear code over $\R$ of length $n$, then $\mu(C)$, $\nu(C)$ are both linear codes over $\Z_4$ of length $n$, while $\alpha(C)$ is a linear code over $\F_2+u\F_2$ of length $n$.
\end{theorem}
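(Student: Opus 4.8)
The plan is to exploit the fact that all three maps, once extended coordinatewise to length-$n$ vectors, are homomorphisms of the appropriate algebraic structures, so that the image of a submodule is again a submodule. The only point that requires a little care is that $\nu$ is \emph{not} multiplicative (it does not respect the ring structure of $\R$), so the argument for $\mu$ and $\nu$ should be phrased in terms of $\Z_4$-linearity rather than in terms of ring homomorphisms.

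First I would record that, writing $\R = \Z_4 \oplus u\Z_4$ as a $\Z_4$-module, both $\mu$ and $\nu$ are $\Z_4$-module homomorphisms from $\R$ onto $\Z_4$: additivity is immediate from $(\overline{a}_1 + u\overline{b}_1) + (\overline{a}_2 + u\overline{b}_2) = (\overline{a}_1+\overline{a}_2) + u(\overline{b}_1 + \overline{b}_2)$, and for $\lambda \in \Z_4$ one has $\lambda(\overline{a} + u\overline{b}) = \lambda\overline{a} + u(\lambda\overline{b})$, so $\mu(\lambda \overline{c}) = \lambda\,\mu(\overline{c})$ and $\nu(\lambda\overline{c})=\lambda\,\nu(\overline{c})$. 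Applying these componentwise gives $\Z_4$-linear maps $(\R)^n \to \Z_4^n$, each of which manifestly preserves length.

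Next, since $\Z_4$ is a subring of $\R$, any $\R$-submodule of $(\R)^n$ is in particular a $\Z_4$-submodule (restriction of scalars along $\Z_4 \hookrightarrow \R$); in particular $C$ is a $\Z_4$-submodule of $(\R)^n$. The image of a $\Z_4$-submodule under a $\Z_4$-linear map is a $\Z_4$-submodule, and this already gives that $\mu(C)$ and $\nu(C)$ are linear codes over $\Z_4$ of length $n$. Spelled out: closure under addition follows from additivity of $\mu$ (resp. $\nu$), and if $\overline{x} = \mu(\overline{c}) \in \mu(C)$ with $\overline{c}\in C$ and $\lambda \in \Z_4$, then $\lambda\overline{c} \in C$ (as $\lambda \in \R$ and $C$ is $\R$-linear), whence $\mu(\lambda\overline{c}) = \lambda\overline{x} \in \mu(C)$; the same works for $\nu$.

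Finally, for $\alpha$ I would use that $\R \cong \Z_4[x]/(x^2)$ and that reducing the $\Z_4$-coefficients modulo $2$ yields a \emph{surjective ring} homomorphism $\alpha : \R \to \F_2[x]/(x^2) = \F_2+u\F_2$. Extended coordinatewise it is additive and length-preserving, and surjectivity on scalars finishes the job: given $\overline{x} = \alpha(\overline{c}) \in \alpha(C)$ and any $\lambda \in \F_2+u\F_2$, pick $\widetilde{\lambda} \in \R$ with $\alpha(\widetilde{\lambda}) = \lambda$; then $\widetilde{\lambda}\,\overline{c} \in C$ and $\alpha(\widetilde{\lambda}\,\overline{c}) = \alpha(\widetilde{\lambda})\,\alpha(\overline{c}) = \lambda\overline{x}$, so $\lambda\overline{x} \in \alpha(C)$. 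Together with additivity this shows $\alpha(C)$ is an $(\F_2+u\F_2)$-submodule of $(\F_2+u\F_2)^n$, i.e. a linear code over $\F_2+u\F_2$ of length $n$. The only genuine obstacle is the bookkeeping forced by $\nu$ not being a ring homomorphism, which is precisely why the $\Z_4$-linearity formulation is the convenient one; all the remaining verifications are routine.
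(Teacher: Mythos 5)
Your proof is correct and follows the same idea the paper uses, which is simply the observation ``since these maps are linear'' (the paper gives no further detail). Your elaboration --- treating $\mu$ and $\nu$ as $\Z_4$-module homomorphisms via restriction of scalars along $\Z_4\hookrightarrow\R$, and $\alpha$ as a surjective ring homomorphism whose surjectivity on scalars yields closure of $\alpha(C)$ under $(\F_2+u\F_2)$-multiplication --- correctly fills in the details, including the genuine subtlety that $\nu$ is additive but not multiplicative.
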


The following theorem describes the images of self-dual codes over $\R$:

\begin{theorem}\label{main}
Let $C$ be a self-dual code over $\R$ of length $n$. Then
\par {\bf a)} $\phi(C)$ is a formally self-dual code over $\Z_4$ of length $2n$.
\par {\bf b)} $\mu(C)$ is a self-orthogonal code over $\Z_4$ of length $n$ and $\alpha(C)$ is self orthogonal over $\F_2+u\F_2$.
\par {\bf c)} If $\nu(C)$ is self-orthogonal, then $\phi(C)$ is a self-dual code of length $2n$.
\end{theorem}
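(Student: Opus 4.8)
The plan is to prove the three parts in turn; all three reduce to two elementary observations: a single orthogonality relation over $\R$ splits into a pair of relations over $\Z_4$ because $\{1,u\}$ is a $\Z_4$-basis of $\R$, and $\abs{D}\cdot\abs{D^{\perp}}=4^{2n}$ for any linear code $D$ over $\Z_4$ of length $2n$. For part~(a), I would use the Lee-weight MacWilliams machinery: since $C=C^{\perp}$, Theorem~\ref{leeMacWilliams} gives $Lee_C(W,X)=\frac{1}{\abs{C}}Lee_C(W+X,W-X)$, so the Lee weight enumerator of $C$ is fixed --- up to the scalar $\abs{C}^{-1}$ --- by the MacWilliams transform. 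By Theorem~\ref{GrayImage} the map $\phi$ is an injective linear isometry, so $\phi(C)$ is a linear code over $\Z_4$ of length $2n$ with $\abs{\phi(C)}=\abs{C}$ and $Lee_{\phi(C)}(W,X)=Lee_C(W,X)$. Applying to $\phi(C)$ the Lee-weight MacWilliams identity over $\Z_4$ --- the exact $\Z_4$-analogue of Theorem~\ref{leeMacWilliams}, obtained from \cite{Wood} in the same way --- and combining with the displayed invariance yields $Lee_{\phi(C)^{\perp}}(W,X)=\frac{1}{\abs{C}}Lee_C(W+X,W-X)=Lee_C(W,X)=Lee_{\phi(C)}(W,X)$, i.e. $\phi(C)$ is formally self-dual (with respect to the Lee weight enumerator) over $\Z_4$.

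For part~(b), I would write two arbitrary codewords of $C$ as $\overline{c}_j=\overline{a}_j+u\overline{b}_j$ with $\overline{a}_j,\overline{b}_j\in\Z_4^n$, $j=1,2$. Since $u^2=0$,
$$\langle\overline{c}_1,\overline{c}_2\rangle=\langle\overline{a}_1,\overline{a}_2\rangle+u\bigl(\langle\overline{a}_1,\overline{b}_2\rangle+\langle\overline{b}_1,\overline{a}_2\rangle\bigr)\quad\text{in }\R.$$
As $C\subseteq C^{\perp}$ the left side is $0$, and by uniqueness of the representation over $\{1,u\}$ both $\Z_4$-coordinates vanish:
$$\langle\overline{a}_1,\overline{a}_2\rangle=0\qquad\text{and}\qquad\langle\overline{a}_1,\overline{b}_2\rangle+\langle\overline{b}_1,\overline{a}_2\rangle=0\qquad\text{in }\Z_4.\qquad(\star)$$
The first relation reads $\langle\mu(\overline{c}_1),\mu(\overline{c}_2)\rangle=0$, so $\mu(C)$ is self-orthogonal over $\Z_4$; alternatively, $\mu$ is the ring homomorphism $\R\to\R/(u)\cong\Z_4$ and $\alpha$ the ring homomorphism $\R\to\R/(2)\cong\F_2+u\F_2$, and each carries the relation $\langle\overline{c}_1,\overline{c}_2\rangle=0$ to the corresponding relation on its image, giving self-orthogonality of $\mu(C)$ over $\Z_4$ and of $\alpha(C)$ over $\F_2+u\F_2$.

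For part~(c), with the same notation $\phi(\overline{c}_j)=(\overline{b}_j,\overline{a}_j+\overline{b}_j)$, so I would compute
$$\langle\phi(\overline{c}_1),\phi(\overline{c}_2)\rangle=\langle\overline{b}_1,\overline{b}_2\rangle+\langle\overline{a}_1+\overline{b}_1,\overline{a}_2+\overline{b}_2\rangle=\langle\overline{a}_1,\overline{a}_2\rangle+\bigl(\langle\overline{a}_1,\overline{b}_2\rangle+\langle\overline{b}_1,\overline{a}_2\rangle\bigr)+2\langle\overline{b}_1,\overline{b}_2\rangle.$$
By $(\star)$ the first two summands vanish, so $\langle\phi(\overline{c}_1),\phi(\overline{c}_2)\rangle=2\langle\overline{b}_1,\overline{b}_2\rangle=2\langle\nu(\overline{c}_1),\nu(\overline{c}_2)\rangle$ in $\Z_4$. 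If $\nu(C)$ is self-orthogonal this is $0$ for all $\overline{c}_1,\overline{c}_2\in C$, hence $\phi(C)\subseteq\phi(C)^{\perp}$. Finally $\phi(C)$ has length $2n$ with $\abs{\phi(C)}=\abs{C}=16^{n/2}$, while $\abs{\phi(C)}\cdot\abs{\phi(C)^{\perp}}=4^{2n}=16^{n}$, so $\abs{\phi(C)^{\perp}}=16^{n/2}=\abs{\phi(C)}$; with the inclusion this forces $\phi(C)=\phi(C)^{\perp}$.

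I do not expect a serious obstacle; the steps needing care are purely bookkeeping. The crux is to record the pair of $\Z_4$-identities $(\star)$ coming out of a single $\R$-orthogonality relation, and then to notice that, after using $(\star)$, the expansion of $\langle\phi(\overline{c}_1),\phi(\overline{c}_2)\rangle$ collapses to the single term $2\langle\nu(\overline{c}_1),\nu(\overline{c}_2)\rangle$ --- which explains why in part~(c) it is a hypothesis on $\nu(C)$, and not on $\mu(C)$ or $\alpha(C)$, that upgrades formal self-duality to genuine self-duality. The cardinality count closing (c) is the only non-formal ingredient, and it is immediate from the Frobenius property recorded earlier.
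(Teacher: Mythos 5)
Your proposal is correct and follows essentially the same route as the paper: part (a) via the invariance of the Lee weight enumerator under the MacWilliams transform together with $\phi$ being a weight-preserving bijection, part (b) by splitting the single $\R$-orthogonality relation into its $1$- and $u$-components, and part (c) by expanding $\langle\phi(\overline{c}_1),\phi(\overline{c}_2)\rangle$ down to $2\langle\nu(\overline{c}_1),\nu(\overline{c}_2)\rangle$ and closing with the cardinality count $\abs{\phi(C)}=16^{n/2}=4^{n}$. Your write-up is in fact slightly more explicit than the paper's (e.g.\ recording the pair of identities $(\star)$ once and reusing it in (c)), but there is no substantive difference.
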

\begin{proof}{\bf a)} We know that $\phi(C)$ is a linear code over $\Z_4$ of length $2n$. Since $C$ is self-dual,
we know by the MacWilliams identity that was proved in section 3, that the weight enumerator of $C$ is invariant under the MacWilliams transform. The result now follows because $\phi$ is weight-preserving.
\par {\bf b)} Suppose $\overline{a}_1$ and $\overline{a}_2$ are in $\mu(C)$. This means that there exist $\overline{b}_1, \overline{b}_2$ in $\Z_4^n$ such that $\overline{a}_1+u\overline{b}_1, \overline{a}_2+u\overline{b}_2 \in C$. However since $C$ is self-dual we must have
$$\langle \overline{a}_1+u\overline{b}_1,\overline{a}_2+u\overline{b}_2\rangle = 0$$
which means
$$\overline{a}_1 \cdot \overline{a}_2+u (\overline{a}_1 \cdot \overline{b}_2+ \overline{a}_2 \cdot \overline{b}_1) = 0$$ from which
$$\overline{a}_1 \cdot \overline{a}_2  =0 $$ follows. Here, $\cdot$ stands for the euclidean dot product in $\Z_4.$
A similar proof can be done for $\alpha$ as well.
\par {\bf c)} Now since $C$ is self-dual, for all $\overline{a}_1+u\overline{b}_1, \overline{a}_2+u\overline{b}_2 \in C$, we have
$$\langle \overline{a}_1+u\overline{b}_1,\overline{a}_2+u\overline{b}_2 \rangle = \overline{a}_1 \cdot \overline{a}_2+u (\overline{a}_1 \cdot \overline{b}_2+
\overline{a}_2 \cdot \overline{b}_1) = 0,$$
from which it follows that
\begin{equation}
\overline{a}_1 \cdot \overline{a}_2  = \overline{a}_1 \cdot \overline{b}_2+ \overline{a}_2 \cdot \overline{b}_1 = 0.
\end{equation}
But by the hypothesis, we also have $\overline{b}_1 \cdot \overline{b}_2 = 0$ for any such codewords. Combining these all, we get
\begin{align*} \phi(\overline{a}_1+u\overline{b}_1)\cdot \phi(\overline{a}_2+u\overline{b}_2) & = (\overline{b}_1,\overline{a}_1+\overline{b}_1)\cdot
(\overline{b}_2,\overline{a}_2+\overline{b}_2)\\
& = 2\overline{b}_1 \cdot \overline{b}_2 + \overline{a}_1 \cdot \overline{a}_2+\overline{a}_1 \cdot \overline{b}_2+ \overline{a}_2 \cdot \overline{b}_1 \\
& = 0.
\end{align*}
This proves that $\phi(C)$ is self-orthogonal. But since $\phi$ is an isometry, both $C$ and $\phi(C)$ have the same size, so
$|C| = |\phi(C)| = 16^{n/2} = 4^n$, which proves that $\phi(C)$ is self-dual.
\end{proof}

\begin{corollary}
If $C$ is a self-dual code over $\R$, generated by a matrix of the form $[I_n|A]$, then $\mu(C)$ and $\alpha(C)$ are self-dual over $\Z_4$ and $\F_2+u\F_2$ respectively.
\end{corollary}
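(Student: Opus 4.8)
The plan is to reduce the statement to part \textbf{b)} of Theorem \ref{main} together with a cardinality count that the special generator form makes transparent. First I would write $A = A_1 + uA_2$ with $A_1,A_2$ matrices over $\Z_4$, and record that, since the generator matrix $[I_n\mid A]$ has $n$ rows and $2n$ columns, $C$ is a code of length $2n$ over $\R$. Both $\mu$ and $\alpha$ are additive and, being induced by surjective ring homomorphisms $\R\to\Z_4$ and $\R\to\F_2+u\F_2$ applied coordinatewise, they carry the $\R$-scalar action onto the $\Z_4$- (resp. $\F_2+u\F_2$-) scalar action and fix $1$. Hence, applying them to the $n$ rows of $[I_n\mid A]$ yields generating sets: $\mu(C)$ is generated over $\Z_4$ by $[I_n\mid A_1]$ and $\alpha(C)$ is generated over $\F_2+u\F_2$ by $[I_n\mid \widetilde{A}]$, where $\widetilde{A}$ denotes $A$ read modulo $2$.

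Next I would observe that any matrix in standard form $[I_n\mid B]$ generates a free module of rank $n$ over its base ring: a vanishing linear combination of its rows has all coefficients zero, as is seen by reading off the first $n$ coordinates. Since both $\Z_4$ and $\F_2+u\F_2$ have four elements, this gives $\abs{\mu(C)} = 4^{n} = \abs{\alpha(C)}$.

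Now I would invoke Theorem \ref{main} \textbf{b)}: because $C$ is self-dual over $\R$, the code $\mu(C)$ is self-orthogonal over $\Z_4$ and $\alpha(C)$ is self-orthogonal over $\F_2+u\F_2$, both of length $2n$. As $\Z_4$ and $\F_2+u\F_2$ are Frobenius, $\abs{\mu(C)}\cdot\abs{\mu(C)^{\perp}} = 4^{2n}$ and $\abs{\alpha(C)}\cdot\abs{\alpha(C)^{\perp}} = 4^{2n}$, so $\abs{\mu(C)^{\perp}} = 4^{n} = \abs{\mu(C)}$ and $\abs{\alpha(C)^{\perp}} = 4^{n} = \abs{\alpha(C)}$. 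A self-orthogonal code is contained in its dual, and a finite code contained in its dual and of the same cardinality as its dual must coincide with it; hence $\mu(C) = \mu(C)^{\perp}$ and $\alpha(C) = \alpha(C)^{\perp}$.

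I do not expect a genuine obstacle. The only point needing a moment's care is the bookkeeping that $\mu$ and $\alpha$ actually send $[I_n\mid A]$ to $[I_n\mid A_1]$ and $[I_n\mid\widetilde{A}]$ — so that the freeness, and therefore the cardinality $4^{n}$, is preserved — after which Theorem \ref{main} \textbf{b)} and the Frobenius cardinality identity close the argument. (Alternatively, one could bypass \textbf{b)} and note directly that self-duality of $C$ forces $I_n + AA^{t} = 0$ over $\R$, whence $I_n + A_1A_1^{t} = 0$ over $\Z_4$ and $I_n + \widetilde{A}\widetilde{A}^{t} = 0$ over $\F_2+u\F_2$, giving self-orthogonality; the cardinality step is then the same.)
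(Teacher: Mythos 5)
Your proof is correct and is essentially the argument the paper intends: the corollary is stated immediately after Theorem 4.6 with no written proof, the evident route being exactly your combination of part (b) (self-orthogonality of $\mu(C)$ and $\alpha(C)$) with the observation that the standard form $[I_n\mid A]$ forces $\abs{\mu(C)}=\abs{\alpha(C)}=4^n$, which matches the size of the dual of a length-$2n$ self-orthogonal code over a four-element Frobenius ring. All the details you fill in (that $\mu$ and $\alpha$ are induced by surjective ring homomorphisms and so carry $[I_n\mid A]$ to generator matrices $[I_n\mid A_1]$ and $[I_n\mid\widetilde{A}]$ of the image codes) are sound.
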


If $\mu(C) = D$ and $\alpha(C) = E$, we say that $C$ is a {\it lift} of $D$ and $E$. One way of obtaining good codes over $\R$ is to take the good ones over $\Z_4$
and $\F_2+u\F_2$ and take their lift over $\R$. The following theorem gives us a bound on how good the lift can be:
\begin{theorem}
Let $D$ be a non-zero linear code over $\Z_4$ and $E$ be a non-zero linear code over $\F_2+u\F_2$ such that $\mu(C) = D$ and $\alpha(C) = E$. Let $d,d',d''$ denote the minimum Lee weights of $C$,  $D$ and $E$ respectively. Then $d \leq 2d'$ and $d\leq 2d''$.
\end{theorem}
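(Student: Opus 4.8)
The plan is to prove both inequalities by the same device: take a minimum Lee weight codeword of the relevant projection, lift it to a codeword of $C$, and then multiply that lift by a carefully chosen element of $\R$ that scales the Lee weight of every coordinate by exactly $2$ while not annihilating a nonzero word. For $\mu$ the right multiplier is $u$, and for $\alpha$ it is $2$.

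For $d \le 2d'$: since $D \neq \{0\}$, pick a nonzero $\bar a \in D$ with $w_L(\bar a) = d'$. As $D = \mu(C)$, there is $\bar b \in \Z_4^n$ with $\bar a + u\bar b \in C$, and multiplying by $u$ (using $u^2 = 0$) gives $u\bar a = u(\bar a + u\bar b) \in C$. The key coordinatewise fact is $w_L(ua) = 2\,w_L(a)$ for $a \in \Z_4$, which is transparent from the Gray map of Theorem~\ref{GrayImage} since $\phi(u\bar a) = (\bar a, \bar a)$, together with the observation that $ua = 0$ in $\R$ forces $a = 0$. Hence $u\bar a$ is a nonzero codeword of $C$ with $w_L(u\bar a) = 2\,w_L(\bar a) = 2d'$, so $d \le 2d'$.

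For $d \le 2d''$: pick a nonzero $\bar e \in E$ with $w_L(\bar e) = d''$ and a codeword $\bar c \in C$ with $\alpha(\bar c) = \bar e$; recall that $\ker \alpha = 2\R$. Then $2\bar c \in C$. The analogous coordinatewise fact is $w_L(2z) = 2\,w_L(\alpha(z))$ for $z \in \R$, which one checks against the four classes of $\alpha(z) \in \F_2 + u\F_2$ (equivalently by looking at $\phi(2z)$ and Table 1), together with the fact that $2z = 0$ holds exactly when $z \in 2\R$, i.e.\ exactly when $\alpha(z) = 0$. Since $\bar e \neq 0$ we get $2\bar c \neq 0$, and $w_L(2\bar c) = 2\,w_L(\alpha(\bar c)) = 2d''$, so $d \le 2d''$.

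I do not expect a genuine obstacle here; the only steps needing care are the two weight-scaling identities $w_L(ua) = 2\,w_L(a)$ on $\Z_4$ and $w_L(2z) = 2\,w_L(\alpha(z))$ from $\R$ to $\F_2 + u\F_2$ (both one-line verifications, most cleanly read off the Gray map or Table 1), and checking that neither multiplication collapses a nonzero starting codeword — which is precisely why we record that $ua = 0 \Rightarrow a = 0$ and $2z = 0 \Leftrightarrow \alpha(z) = 0$.
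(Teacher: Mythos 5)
Your proof is correct and follows essentially the same route as the paper: lift a minimum-weight word of the projection into $C$ and multiply by $u$ (resp.\ $2$) to land back in $C$ with exactly doubled Lee weight. You in fact supply more detail than the paper, which dismisses the second inequality with ``proved in exactly the same way''; your explicit verification that the multiplier for $\alpha$ must be $2$, that $w_L(2z)=2\,w_L(\alpha(z))$, and that neither multiplication kills a nonzero word are exactly the right points to check.
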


\begin{proof}
Let $\overline{x} \in D$ be such that $w_L(\overline{x}) = d'$. Since $\mu(C) = D$, $\exists \overline{y}\in \Z_4^n$ such that $\overline{x}+u\overline{y} \in D$. Since $C$ is linear, we have $u(\overline{x}+u\overline{y}) = u \overline{x} \in C$. But by definition of the Lee weight on $\R$, we have
$w_L(u \overline{x}) = w_L(\overline{x}, \overline{x}) = 2d'$.
Thus the inequality $d\leq 2d'$ is proved.

The second inequality is proved in exactly the same way.
\end{proof}
As the theorem suggests, to construct good codes over $\R$ by lifting from $\Z_4$ and $\F_2+u\F_2$, we need to take codes over the projection rings that have high minimum distances.
We illustrate this in the following example:
\par {\bf Example} Let $D$ be the $\Z_4$-code generated by $G' = [I_8|A']$  and $E$ be the $\F_2+u\F_2$-linear code generated by $G''=[I_8|A'']$ where
$$A' = \left [
\begin{array}{cccccccc}
0&2&3&0&0&1&3&2 \\
2&0&2&3&0&0&1&3\\
3&2&0&2&3&0&0&1\\
1&3&2&0&2&3&0&0\\
0&1&3&2&0&2&3&0\\
0&0&1&3&2&0&2&3\\
3&0&0&1&3&2&0&2\\
2&3&0&0&1&3&2&0
\end{array}
\right], \:\:\:A'' = \left [
\begin{array}{cccccccc}
u&u&1&u&0&1&1&u \\
u&u&u&1&u&0&1&1\\
1&u&u&u&1&u&0&1\\
1&1&u&u&u&1&u&0\\
0&1&1&u&u&u&1&u\\
u&0&1&1&u&u&u&1\\
1&u&0&1&1&u&u&u\\
u&1&u&0&1&1&u&u
\end{array}
\right].$$
$D$ and $E$ are both codes of length $16$, size $4^8$ and minimum Lee distance $8$. We consider a common lift of $D$ and $E$ over $\Z_4+u\Z_4$ to obtain $C$ that is generated by
$G = [I_8|A]$, where
$$A = \left [
\begin{array}{cccccccc}
u&2+u&3+2u&u&0&1&3&2+u \\
2+u&u&2+u&3+2u&u&0&1&3\\
3&2+u&u&2+u&3+2u&u&0&1\\
1&3&2+u&u&2+u&3+2u&u&0\\
0&1&3&2+u&u&2+u&3+2u&u\\
u&0&1&3&2+u&u&2+u&3+2u\\
3+2u&u&0&1&3&2+u&u&2+u\\
2+u&3+2u&u&0&1&3&2+u&u
\end{array}
\right].$$
$C$ is a linear code over $\Z_4+u\Z_4$ of length $16$, size $(16)^8 = 4^{16}$ and minimum Lee distance $12$. Taking the Gray image, we get $\phi(C)$ to be a formally self-dual $\Z_4$-code of length $32$ and minimum Lee distance $12$.

\section{Three constructions for formally self-dual codes over $\R$}
Because of Theorem 3.10 we can easily show that the Gray image of formally self-dual codes over $\R$ are formally self-dual over $\Z_4$ as well. Some of the construction methods described in \cite{Huffman-Pless} for binary codes can be extended to $\Z_4+u\Z_4$ as well.
\begin{theorem}
Let $A$ be an $n\times n$ matrix over $\R$ such that $A^{T} = A$. Then the code generated by the matrix $[I_n\ | \ A]$ is a formally self-dual code of length $2n$.
\end{theorem}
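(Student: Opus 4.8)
The plan is to exhibit a Lee-weight-preserving bijection $\rho$ of $(\R)^{2n}$ that carries $C:=\langle[I_n\,|\,A]\rangle$ onto its dual $C^{\perp}$. Since an isometry preserves weight enumerators, this gives $Lee_{C^{\perp}}=Lee_{\rho(C)}=Lee_{C}$, which is exactly formal self-duality of $C$ (of length $2n$).

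First I would pin down $C^{\perp}$ explicitly. Because $[I_n\,|\,A]$ contains an identity block, the map $\overline{v}\mapsto(\overline{v},\overline{v}A)$ is a bijection $(\R)^{n}\to C$, so $|C|=16^{n}$. Using the hypothesis $A^{T}=A$ one checks $[I_n\,|\,A]\,[{-A}\,|\,I_n]^{T}={-A^{T}}+A=0$, so every row of $[{-A}\,|\,I_n]$ lies in $C^{\perp}$; since that matrix also has an identity block, it generates a submodule of $C^{\perp}$ of size $16^{n}$. As $\R$ is Frobenius, $|C|\cdot|C^{\perp}|=16^{2n}$, so $|C^{\perp}|=16^{n}$ and hence $C^{\perp}=\langle[{-A}\,|\,I_n]\rangle$.

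Next I would define $\rho:(\R)^{2n}\to(\R)^{2n}$ by $\rho(\overline{x},\overline{y})=(\overline{y},-\overline{x})$ for $\overline{x},\overline{y}\in(\R)^{n}$. This is the composition of the permutation swapping the two coordinate blocks with the monomial transformation multiplying the last $n$ coordinates by the unit $-1$; since $\phi(-r)=-\phi(r)$ and the Lee weight on $\Z_4$ is negation-invariant (compare Table 1), multiplication by $-1$ preserves $w_{L}$ on $\R$, so $\rho$ is a Lee isometry. For $(\overline{v},\overline{v}A)\in C$ one computes $\rho(\overline{v},\overline{v}A)=(\overline{v}A,-\overline{v})=(-\overline{v})\,[{-A}\,|\,I_n]\in C^{\perp}$, so $\rho(C)\subseteq C^{\perp}$; as $\rho$ is a bijection and $|C|=|C^{\perp}|$, in fact $\rho(C)=C^{\perp}$, and therefore $Lee_{C}=Lee_{C^{\perp}}$. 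The same bookkeeping shows a little more: since $-1$ permutes the elements of $\R$ within each Lee-weight class, $\rho$ also preserves the symmetrized weight enumerator, so $swe_{C}=swe_{C^{\perp}}$ as well.

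The proof is short, and the only point that needs care is the sign. Over $\R=\Z_4+u\Z_4$ we have $-1\neq1$, so a bare block swap does not send $[I_n\,|\,A]$ to a generator matrix of $C^{\perp}$; the factor $-1$ must be folded into $\rho$, and the argument works precisely because that factor is invisible to the Lee weight. (By contrast $\rho$ does change the complete weight enumerator, which is why this construction yields formal — not complete — self-duality, as indeed it must, the conclusion being false for the complete weight enumerator.)
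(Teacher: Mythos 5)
Your proof is correct and follows essentially the same route as the paper: identify $C^{\perp}$ as the code generated by $[-A\,|\,I_n]$ (row orthogonality from $A^T=A$ plus the size count $|C|\cdot|C^{\perp}|=16^{2n}$), and then observe that the block swap composed with negation is a Lee-weight-preserving equivalence carrying $C$ onto $C^{\perp}$. You merely make explicit two points the paper compresses — the counting step upgrading $C'\subseteq C^{\perp}$ to equality, and the concrete isometry $\rho(\overline{x},\overline{y})=(\overline{y},-\overline{x})$ realizing the equivalence — which is a welcome clarification but not a different argument.
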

\begin{proof}
Consider the matrix $[-A^T\ | \ I_n] = [-A\ | \ I_n] = G'$ and let $G = [I_n\ | \ A]$. Both $G$ and $G'$ generate codes of size $16^n$. Moreover the codes generated are equivalent over $\Z_4+u\Z_4$. So if $C = \langle G \rangle$ and $C' = \langle G'\rangle$, all we need to do to finish the proof is to show that $C' = C^{\perp}$.

Let $\overline{v}$ be the $i$-th row of $G$ and $\overline{w}$ be the $j$-th row of $G'$. Then $\langle\overline{v}, \overline{w}\rangle_k = -A_{ji}+A_{ij} = 0$ since $A^T = A$. Therefore $C'  = C^{\perp}$ and $C$ is equivalent to $C' = C^{\perp}$. Since $w_L(-a)= w_L(a)$ for all $a \in \R$, this is a weight preserving equivalence.
\end{proof}

\begin{theorem} Let $M$ be a circulant matrix over $\R$ of order $n$. Then the matrix $[I_n\ | \ M]$ generates a formally self-dual code over $\R$. This is called the double circulant construction.
\end{theorem}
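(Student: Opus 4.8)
The plan is to mimic the proof of the preceding transpose construction, but to replace the hypothesis $A^T=A$ by the combinatorial symmetry enjoyed by circulant matrices. Write $G=[I_n\mid M]$ and $C=\langle G\rangle$. Because of the identity block the rows of $G$ are free over $\R$, so $|C|=16^n$; and exactly as in the previous proof one checks that $G'=[-M^T\mid I_n]$ generates a code orthogonal to $C$. Since $\R$ is Frobenius, $|C|\cdot|C^\perp|=16^{2n}$, so $\langle G'\rangle\subseteq C^\perp$ and the two have the same cardinality $16^n$, whence $C^\perp=\langle G'\rangle$.

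The structural fact I would isolate is that the transpose of a circulant is again circulant, and in fact a conjugate of the original by the index–reversal permutation. Let $\sigma$ be the involution $i\mapsto -i \pmod n$ on $\{0,1,\dots,n-1\}$ and $P$ its permutation matrix. If $M=\mathrm{circ}(m_0,m_1,\dots,m_{n-1})$, then $(PMP)_{ij}=M_{\sigma(i)\sigma(j)}=m_{\sigma(j)-\sigma(i)}=m_{i-j}=M_{ji}$, so $M^T=PMP$. This is the only identity of substance that the argument uses.

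Now I would exhibit a chain of Lee–weight–preserving equivalences carrying $C^\perp$ to $C$. First, swapping the two coordinate blocks of length $n$ sends $\langle[-M^T\mid I_n]\rangle$ to $\langle[I_n\mid -M^T]\rangle$. Second, negating every coordinate of the second block sends this to $\langle[I_n\mid M^T]\rangle$; this is a legitimate weight-preserving move since $w_L(-a)=w_L(a)$ for all $a\in\R$. Third, applying the permutation $\sigma$ simultaneously to the coordinates of each block replaces the right-hand generator block $M^T$ by $P\,M^T\,P = P(PMP)P = M$ (using that $P=P^{-1}$ and the identity above), so it sends $\langle[I_n\mid M^T]\rangle$ to $\langle[I_n\mid M]\rangle=C$. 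Each of the three steps is a monomial transformation assembled from coordinate permutations and sign changes, hence an isometry for the Lee weight; composing them shows that $C^\perp$ is equivalent to $C$ via a Lee–weight–preserving equivalence. Consequently $C$ and $C^\perp$ have the same weight enumerators, i.e. $C$ is a formally self-dual code of length $2n$, as claimed.

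The one place that demands care — and which I regard as the main (though still routine) obstacle — is the bookkeeping in the third step: one must track precisely how a simultaneous coordinate permutation on the two blocks transforms the generator matrix $[I_n\mid M^T]$, and confirm that it conjugates $M^T$ to $M$ rather than to $P M^T P^{-1}$ read the other way or to $(M^T)^T$ via a wrong convention. Everything else (freeness of the rows, the orthogonality computation, the size count via the Frobenius property, and the invariance of $w_L$ under negation) parallels the preceding theorem.
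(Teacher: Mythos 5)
Your proof is correct and follows essentially the same route as the paper: identify $C^{\perp}$ as the code generated by $[-M^{T}\mid I_n]$, then exhibit a weight-preserving equivalence (sign changes plus coordinate permutations exploiting the circulant structure) carrying it onto $C$. Your explicit identity $M^{T}=PMP$ for the index-reversal permutation $P$ is simply a precise formulation of the row/column permutations $\sigma,\tau,\rho$ that the paper describes more informally.
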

\begin{proof}
Let $C$ be the code generated by $[I_n\ | \ M]=G$ and let $C'$ be the code generated by $[-M^{T}\ | \ I_n] =G'$. It can similarly be shown that $C' = C^{\perp}$. Observe that $C'$ is equivalent over $\Z_4+u\Z_4$ to the code $C''$ generated by $[M^T|I_n] = G''$. So to show the equivalence of $C$ and $C'$, we will show the equivalence of $C$ and $C''$. There is a permutation $\sigma$ of rows such that after applying it to $G''$, the first column of $\sigma(M^{T})$ is the same as the first column of $M$. Namely,
$$(M_{11},M_{21}, \cdots, M_{n1}) = (M^T_{\sigma(1)1}, M^T_{\sigma(2)1}, \cdots, M^T_{\sigma(n)1}) = (M_{1\sigma(1)}, M_{1\sigma(2)}, \cdots, M_{1\sigma(n)}).$$
Since the matrix $M$ is circulant, every column of $M$ is then equal to a column of $\sigma(M^T)$. Then apply the necessary column permutation $\tau$ so that $\tau(\sigma(M^T)) = M$. We apply another column permutation $\rho$ so that the matrix found by $\sigma(I_n)$ is returned to identity. Note that $\tau$ does not affect this part. Thus we obtain $G$ from $G''$ by the consecutive applications of the permutations $\sigma, \tau$ and $\rho$, which means that $C$ is equivalent to $C'= C^{\perp}$, and again as in the first theorem this is a weight preserving equivalence.
\end{proof}

Now, applying Theorem \ref{GrayImage}, we have the following corollary:
\begin{corollary}
Let $C$ be a linear code over $\R$ generated by a matrix of the form $[I_n|A]$, where $A$ is an $n\times n$ matrix. If $A$ is symmetric or circulant, then C is formally self-dual and hence $\phi(C)$ is a formally self-dual code over $\Z_4$ of length $4n$.
\end{corollary}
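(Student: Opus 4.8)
The plan is to deduce everything from results already in hand. By the two construction theorems just proved, the code $C=\langle[I_n\mid A]\rangle$ over $\R$ is formally self-dual whenever $A$ is symmetric or circulant; in fact those proofs produce a weight-preserving equivalence carrying $C$ onto $C^{\perp}$, so in particular $Lee_C = Lee_{C^{\perp}}$. Since $A$ is $n\times n$, the code $C$ has length $2n$ over $\R$, hence $\phi(C)$ has length $4n$ over $\Z_4$ by Theorem~\ref{GrayImage}; this accounts for the length assertion. It therefore remains only to show that $\phi$ sends a formally self-dual $\R$-code to a formally self-dual $\Z_4$-code --- the observation recorded at the start of this section --- and I would spell it out as follows.

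First, feeding $Lee_C = Lee_{C^{\perp}}$ into the MacWilliams identity of Theorem~\ref{leeMacWilliams} yields
$$Lee_C(W,X)=\tfrac{1}{|C|}\,Lee_C(W+X,\,W-X),$$
so $Lee_C$ is fixed, up to the scalar $1/|C|$, under the substitution $(W,X)\mapsto(W+X,W-X)$. Next, Theorem~\ref{GrayImage} says $\phi$ is a Lee-isometry; being distance-preserving and linear it is injective, so $|\phi(C)|=|C|$, and it preserves the multiset of codeword Lee weights, whence $Lee_{\phi(C)}=Lee_C$ (both are homogeneous of the same degree). Consequently $Lee_{\phi(C)}$ satisfies the same invariance. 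Finally, the classical MacWilliams identity for the Lee weight enumerator of a $\Z_4$-linear code has exactly the same shape, $Lee_{(\phi(C))^{\perp}}(W,X)=\tfrac{1}{|\phi(C)|}Lee_{\phi(C)}(W+X,W-X)$, so the invariance just obtained reads $Lee_{(\phi(C))^{\perp}}=Lee_{\phi(C)}$; that is, $\phi(C)$ is formally self-dual over $\Z_4$. (If one prefers to stay self-contained, the $\Z_4$-level identity can be re-derived exactly as Theorem~\ref{leeMacWilliams} was.)

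I do not anticipate a genuine obstacle, since this is a corollary; the one point worth flagging is that $\phi$ is not compatible with taking duals, so in general $\phi(C^{\perp})\neq(\phi(C))^{\perp}$ and one cannot argue ``$C\sim C^{\perp}\Rightarrow\phi(C)\sim(\phi(C))^{\perp}$'' directly. Routing the argument through Lee weight enumerators --- and exploiting that the $\R$-level transform of Theorem~\ref{leeMacWilliams} and the $\Z_4$-level Lee transform are the identical operator $(W,X)\mapsto(W+X,W-X)$ --- sidesteps this and makes the passage immediate.
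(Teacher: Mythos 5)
Your proposal is correct and follows essentially the same route the paper intends: the two construction theorems give $Lee_C = Lee_{C^{\perp}}$, and then (as in the proof of Theorem~\ref{main}(a)) one combines the MacWilliams invariance of $Lee_C$ under $(W,X)\mapsto(W+X,W-X)$ with the fact that $\phi$ is a Lee-weight-preserving bijection and that the $\Z_4$-level Lee MacWilliams transform has the identical form. The paper states the corollary without writing out these steps; your explicit warning that $\phi$ does not commute with duals, so the argument must pass through weight enumerators rather than through an equivalence of $\phi(C)$ with $(\phi(C))^{\perp}$, is exactly the right point to make precise.
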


\begin{theorem}\label{bordered}
Let $M$ be a circulant matrix over $\R$ of order $n-1$. Then the matrix
$$G= \left [
\begin{array}{cccc|ccccc}
&  &  &  & \alpha & \beta & \beta & ...  &\beta  \\
&  &  &  & \gamma &  &  &  &  \\
&  & I_n &  & \gamma &  & M &  &  \\
&  &  &  & . &  &  &  &  \\
&  &  &  & . &  &  &  &  \\
&  &  &  & \gamma &  &  &  &  \\
\end{array} \right],
$$ where $\alpha, \beta, \gamma \in \R$ such that $\gamma = \pm \beta$, generates a formally self-dual code of length $2n$ over $\Z_4+u\Z_4$ whose Gray image is a formally self-dual code over $\Z_4$ of length $4n$. This is called the bordered double circulant construction.
\end{theorem}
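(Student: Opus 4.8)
The plan is to follow the template of the two preceding constructions: exhibit a generator matrix for $C^{\perp}$, show that $C$ and $C^{\perp}$ are related by a Lee-weight-preserving equivalence, and conclude that $C$ is formally self-dual over $\R$ of length $2n$; the $\Z_4$-statement then follows because $\phi$ preserves Lee weight (Theorem~\ref{GrayImage}) together with the Lee MacWilliams identity (Theorem~\ref{leeMacWilliams}), exactly as remarked at the beginning of this section.

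Write $G=[I_n\mid A]$, where $A$ is the bordered circulant with corner entry $\alpha$, border row equal to $\beta$ in positions $2,\dots,n$, border column equal to $\gamma$ in positions $2,\dots,n$, and $M$ in the lower-right $(n-1)\times(n-1)$ block. First I would record the standard fact that $C^{\perp}$ is generated by $G'=[-A^{T}\mid I_n]$: the inner product of the $i$-th row of $G$ with the $j$-th row of $G'$ is $-A_{ij}+A_{ij}=0$, and since both $G$ and $G'$ contain an $I_n$ block they generate codes of size $16^{n}$, so the identity $\abs{C}\cdot\abs{C^{\perp}}=16^{2n}$ forces $\langle G'\rangle=C^{\perp}$. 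Multiplying the first $n$ coordinates by the unit $-1$ (Lee-weight-preserving, since $w_L(-a)=w_L(a)$) turns $G'$ into $[A^{T}\mid I_n]$, and interchanging the two blocks of $n$ coordinates turns this into $[I_n\mid A^{T}]$; hence $C^{\perp}$ is weight-preservingly equivalent to the code generated by $[I_n\mid A^{T}]$. Note that $A^{T}$ is again a bordered circulant with the same corner $\alpha$, but with the roles of $\beta$ and $\gamma$ interchanged and with $M^{T}$ in the lower-right block.

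The remaining and crucial step is to show $\langle[I_n\mid A^{T}]\rangle$ is weight-preservingly equivalent to $C=\langle[I_n\mid A]\rangle$; this is where the circulant and border hypotheses enter. Let $R$ be the index-reversal permutation of the $n-1$ positions $2,\dots,n$. The standard fact that conjugating a circulant by its index reversal transposes it gives $R^{T}MR=M^{T}$, and $R$ also permutes the all-$\beta$ border row among its own entries, permutes the all-$\gamma$ border column among its own entries, and fixes the corner $\alpha$. Applying the permutation $1\oplus R$ to both blocks of $n$ coordinates of $[I_n\mid A^{T}]$ and re-systematizing by row operations (the column-operation matrix is a permutation matrix, hence orthogonal, so the re-systematizing row operations realize precisely the transpose conjugation) yields $[I_n\mid\widetilde A]$, where $\widetilde A$ is the bordered circulant with corner $\alpha$, border row $\gamma$, border column $\beta$, and $M$ in the lower-right block. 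If $\gamma=\beta$ then $\widetilde A=A$ and we are finished; if $\gamma=-\beta$, composing in addition with the sign change $\operatorname{diag}(1,-1,\dots,-1)$ on both blocks (again using only the unit $-1$, hence Lee-weight-preserving, and conjugating $M$ by $\operatorname{diag}(-1,\dots,-1)=$ identity) sends $\widetilde A$ to $A$. In either case $C$ is carried onto $C^{\perp}$ by a weight-preserving equivalence, so the two codes share their symmetrized and Lee weight enumerators; thus $C$ is formally self-dual of length $2n$, and by Theorem~\ref{GrayImage} and Theorem~\ref{leeMacWilliams} its Gray image $\phi(C)$ is a formally self-dual code over $\Z_4$ of length $4n$.

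The one place demanding care is the bookkeeping in the third paragraph: one must verify that a single permutation of the last $n-1$ coordinates simultaneously transposes $M$, preserves each of the two border vectors, and fixes the corner, and that the extra diagonal sign change needed when $\gamma=-\beta$ reconciles $\beta$ and $\gamma$ in the border without disturbing $\alpha$ or $M$. Everything else is essentially the argument already used for the double circulant construction, with the border merely adding the hypothesis $\gamma=\pm\beta$.
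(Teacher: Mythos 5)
Your proposal is correct and follows essentially the same route as the paper: identify $C^{\perp}=\langle[-A^{T}\mid I_n]\rangle$, transpose the circulant block by a coordinate permutation, and use sign changes by the unit $-1$ (which is Lee-weight-preserving) to reconcile the $\beta$/$\gamma$ border under the hypothesis $\gamma=\pm\beta$. Your version is somewhat more explicit than the paper's --- you name the index-reversal permutation realizing $R^{T}MR=M^{T}$ and handle the $\gamma=-\beta$ case by conjugating with $\operatorname{diag}(1,-1,\dots,-1)$, where the paper instead negates all but the first row of $G^{*}$ and appeals directly to $w_L(-a)=w_L(a)$ --- but the underlying argument is the same.
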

\begin{proof}
Let
$$G' = \left [
\begin{array}{ccccc|cccc}
-\alpha & -\gamma & -\gamma & ...  &-\gamma &  &  &  &  \\
-\beta &  &  &  &  &  &  &  & \\
 -\beta &  & -M^{T} &  &  &  & I_n &  &\\
 . &  &  &  & &  &  &  & \\
 . &  &  &  &  &  &  &  &\\
 -\beta &  &  &  & &  &  &  & \\
\end{array} \right].
$$ It is easy to see that $G'$ generates $C^{\perp}$.
By the same method as was done in the previous theorem the parts of $G$ and $G'$ except the $\beta$ and $\gamma$ can be made equuivalent. Multiplying all the columns except the $I_n$ by $-1$ we see that $C^{\perp}$ is equivalent to a code $C^*$ with generator matrix of the form
$$G^* = \left [
\begin{array}{cccc|ccccc}
&  &  &  & \alpha & \gamma & \gamma & ...  &\gamma  \\
&  &  &  & \beta &  &  &  &  \\
&  & I_n &  & \beta &  & M &  &  \\
&  &  &  & . &  &  &  &  \\
&  &  &  & . &  &  &  &  \\
&  &  &  & \beta &  &  &  &  \\
\end{array} \right].$$
If $\gamma = \beta$, it is easy to see that $C$ and $C^*$ will be the same codes. If $\gamma = -\beta$, then multiply all but the first row of $G^*$ by $-1$. The resulting matrix still generates $C^*$. But since $w_L(a) = w_L(-a)$ for all $a \in \R$, we see that $C$ and $C^*$ will have the same weight enumerator. Hence in both cases we see that $C$ and $C^{\perp}$ have the same weight enumerators.
\end{proof}

We finish with some examples of formally self-dual $\Z_4$-codes thus obtained:

\bigskip
\noindent
{\bf Formally Self-dual codes from double circulant constructions}

We present in the following, some good formally self-dual codes over $\Z_4$ obtained from formally self-dual codes constructed over $\R$ by the purely double circulant matrices. So the code $C$ is generated over $\R$ by a matrix of the form $[I_n|M]$ where $M$ is a circulant matrix, hence in the tables we will only give the first row of $M$. The length indicates the length of the code over $\R$, hence the length of the $\Z_4$-image is doubled. We will also indicate the minimum Lee distance of the codes:
\bigskip
\noindent
\begin{table}[H]
 \caption{Good f.s.d $\Z_4$-codes obtained from double circulant matrices over $\R$}
\begin{tabular}
[c]{|c|c|c|}\hline
Length & First Row of $M$ & $d$ \\\hline
$4$ & ($2$,$1+2u$)& $4$ \\\hline
$6$ & ($2$,$1$,$3u$)& $6$ \\\hline
$8$ & ($3+3u$,$3u$,$2u$,$2+3u$)& $8$ \\\hline
$10$ & ($1$,$0$,$2$,$3u$,$2+u$)& $8$ \\\hline
$12$ & ($0$,$2$,$3$,$2u$,$3$,$u$)& $10$ \\\hline
$14$ & ($3+3u$,$3+3u$,$1+2u$,$1$,$2+2u$,$3$,$3$)& $11$ \\\hline
$16$ & ($0$,$0$,$1+2u$,$1+2u$,$1$,$1$,$3u$,$1+u$)& $12$ \\\hline
$18$ & ($0$,$0$,$1$,$1$,$1+2u$,$3+3u$,$2+2u$,$1+u$,$2$)& $12$ \\\hline
$20$ & ($0$,$0$,$1$,$3$,$1$,$3+2u$,$u$,$3+2u$,$u$,$2+u$)& $14$ \\\hline
$22$ & ($0$,$0$,$1$,$1$,$1$,$1$,$2$,$1$,$2+2u$,$1+3u$,$3+2u$)& $14$ \\\hline
$24$ & ($0$,$0$,$1$,$1$,$1$,$1$,$0$,$1$,$0$,$2$,$2u$,$2+3u$)& $14$ \\\hline
$26$ & ($0$,$0$,$1$,$1$,$1$,$1$,$0$,$3$,$1+u$,$2u$,$3u$,$1+2u$,$3+2u$)& $15$ \\\hline
\end{tabular}
\end{table}

\bigskip
\noindent
{\bf Formally Self-dual codes from bordered double circulant constructions}

We present in the following, some good formally self-dual codes over $\Z_4$ obtained from formally self-dual codes constructed over $\R$ by bordered double circulant matrices as given in Theorem \ref{bordered}. So the code $C$ is generated over $\R$ by a matrix of the form given in Theorem \ref{bordered}.Hence in the tables we will give $\alpha, \beta, \gamma$ values as well as the first row of $M$. Again the length of the $\Z_4$-images is double the length indicated. $d$ denotes the minimum Lee distance:

\bigskip
\noindent
\begin{table}[H]
 \caption{Good f.s.d $\Z_4$-codes obtained from bordered double circulant matrices over $\R$}
\begin{tabular}
[c]{|c|c|c|c|}\hline
Length & First Row of $M$ & $(\alpha, \beta,\gamma)$& $d$ \\\hline
$4$ & ($0$)& $(0, 1+2u,1+2u)$ & $4$\\\hline
$6$ & ($2u$,$1$)& $(3+3u,1+3u,1+3u)$ & $6$\\\hline
$8$ & ($3+3u$,$3+2u$,$u$)& $(2,3+2u,3+2u)$ & $8$\\\hline
$10$ & ($0$,$0$,$1+2u$,$1$)& $(3,1+2u,1+2u)$ & $8$\\\hline
$12$ & ($1+2u$,$1$,$2$,$1+3u$,$3$)& $(u,1+2u,1+2u)$ & $10$\\\hline
$14$ & ($0$,$0$,$u$,$u$,$2$,$3+2u$)& $(3+u,1+2u,1+2u)$ & $10$\\\hline
$16$ & ($1$,$1$,$0$,$1$,$3+u$,$3u$,$1+2u$)& $(3+2u,1,1)$ & $11$\\\hline
$18$ & ($0$,$0$,$0$,$0$,$2+2u$,$3u$,$1$,$3+2u$)& $(3+u,3+2u,3+2u)$ & $12$\\\hline
$20$ & ($0$,$0$,$0$,$0$,$u$,$1+3u$,$1+u$,$u$,$2+2u$)& $(1+u,3+2u,3+2u)$ & $12$\\\hline
$22$ & ($0$,$0$,$0$,$0$,$2u$,$1+u$,$3+u$,$1+3u$,$2+2u$,$2+3u$)& $(1+u,3+2u,3+2u)$ & $14$\\\hline
$24$ & ($0$,$0$,$0$,$0$,$0$,$1$,$u$,$2u$,$2+2u$,$2+3u$,$3$)& $(1,1+2u,1+2u)$ & $14$\\\hline
\end{tabular}
\end{table}

\begin{remark}
The first construction, namely the construction by symmetric matrices did not lead to good numerical results. That is why we did not tabulate codes from the first construction.
\end{remark}
\begin{remark}
In both the tables given above, an exhaustive search was made through all possible codes of the given form up to length $14$, but for higher lengths the search was not exhaustive. For these higher lengths, an exhaustive search can be made using more powerful computational resources with a possibility of better minimum distances.
\end{remark}

\bibliographystyle{amsplain}

\end{document}